\newcommand{\C}{\mathcal{C}}
\newtheorem*{q}{\sc Questions}
\newtheorem*{teo*}{Theorem}
\newtheorem{teo}{Theorem}[section]
\newtheorem{cor}[teo]{Corollary}
\newtheorem{lema}[teo]{Lemma}
\newtheorem{prop}[teo]{Proposition}
\newcommand{\bi}{\begin{itemize}}
\newcommand{\ei}{\end{itemize}}
\theoremstyle{definition}
\theoremstyle{remark}
\newtheorem{obs}[teo]{Remark}
\newcommand{\B}{\mathcal{B}}
\newcommand{\F}{\mathcal{F}}
\newcommand{\U}{\mathcal{U}}
\newcommand{\W}{\mathcal{W}}
\newcommand{\R}{\mathcal{R}}
\newcommand{\vol}{\mathrm{vol}}
\DeclareMathOperator{\Hol}{Hol}
\DeclareMathOperator{\Id}{Id}
\DeclareMathOperator{\Ball}{B}
\DeclareMathOperator{\diam}{diam}
\DeclareMathOperator{\length}{length}
\newcommand{\ii}{W^{cu}(\mathcal{C})}
\begin{document}

\author[Verónica De Martino]{Verónica De Martino} 
\address{CMAT, Facultad de Ciencias, Universidad de la Rep\'ublica, Uruguay}
\email{vdemartino@cmat.edu.uy}

\author[Santiago Martinchich]{Santiago Martinchich} 
\address{CMAT, Facultad de Ciencias, Universidad de la Rep\'ublica, Uruguay}
\email{smartinchich@cmat.edu.uy}

\title[Partially hyperbolic with compact center foliation]{Codimension one compact center foliations are uniformly compact}
\thanks{This research was partially funded by CSIC (Iniciación) No. 133, CSIC Group No. 618 and FCV-2017-111}

\begin{abstract}
Let $f:M\to M$ be a dynamically coherent partially hyperbolic diffeomorphism whose center foliation has all its leaves compact. We prove that if the unstable bundle of $f$ is one-dimensional, then the volume of center leaves must be bounded in $M$.




\end{abstract}

\maketitle

\section{Introduction}

\subsection{Context}
A diffeomorphism $f:M\to M$ in a closed manifold $M$ is said to be \emph{partially hyperbolic} if the tangent bundle $TM$ decomposes as a direct sum of continuous and $Df$-invariant subbundles $$TM=E^s\oplus E^c \oplus E^u$$ such that vectors in $E^s$ are uniformly contracted by $Df$, vectors in $E^u$ are uniformly contracted by $Df^{-1}$ and vectors in $E^c$ have an intermediate behavior. 

The bundles $E^s$ and $E^u$ uniquely integrate to $f$-invariant foliations $\W^u$ and $\W^s$, respectively (see e.g. \cite{HPS}). The bundles $E^s\oplus E^c$ and $E^c\oplus E^u$ may or may not integrate to foliations $\W^{cs}$ and $\W^{cu}$. If they do integrate to $f$-invariant foliations the diffeomorphism is said to be \emph{dynamically coherent}. 

If $f$ is dynamically coherent, the bundle $E^c$ also integrates to an invariant foliation $\W^c$ whose leaves are the connected components of the intersections of leaves of $\W^{cs}$ and $\W^{cu}$.

This work fits in the context of studying partially hyperbolic diffeomorphisms where $\W^c$ is a compact foliation (namely, all leaves of $\W^c$ are compact).

Since Sullivan presented his example in \cite{S} of a foliation by cicles with unbounded lenght of leaves, compact foliations have been divided in whether the volume of leaves is uniformly bounded or not (see section \ref{compactfoliations} for a more detailed discussion). In particular, in the uniformly bounded case the leaf space is Hausdorff and has a nice orbifold structure, while in the non uniformly bounded scenario the leaf space is not Hausdorff and may have a complicate structure (see e.g \cite{Eps} and \cite{Vog}). 

Pugh posed the following questions (see ~\cite{RHRHU}, ~\cite{Gog}).

\begin{q} 
\textit{Let $f:M\to M$ be a partially hyperbolic diffeomorphism with a compact center foliation $\W^c$. Is it true that the volume of center leaves is uniformly bounded? Is it true that $f$ can be finitely covered by a
partially hyperbolic diffeomorphism $\tilde f \colon \widetilde M \to \widetilde M$ so that
there is a fibration $p \colon \widetilde M \to N$ whose fibers are
the center leaves and an Anosov diffeomorphism $\bar f \colon N\to N$
such that $p$ is a semiconjugacy between $\tilde f$ and $\bar f$?}
\end{q}

Progress on this questions has been done by Bonatti-Wilkinson \cite{BoW}, Bohnet \cite{Boh}, Bohnet-Bonatti \cite{BoBo}, Carrasco \cite{Car} and Gogolev \cite{Gog}.

\subsection{Main result}

The main theorem of this work is the following:

\begin{teo}\label{main}
Let $f:M\to M$ be a dynamically coherent partially hyperbolic diffeomorphism with compact center foliation $\W^c$. If $\dim(E^u)=1$ then the volume of the center leaves is uniformly bounded. 
\end{teo}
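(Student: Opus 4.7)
The plan is to argue by contradiction. Assume the volume function $\vol \colon M \to \RR_{>0}$, $\vol(x) := \vol(\W^c(x))$, is not uniformly bounded. By the classical theory of compact foliations, the \emph{bad set} $X = \{x \in M : \vol \text{ is not locally bounded at } x\}$ is a closed, nonempty, proper subset of $M$ saturated by $\W^c$; on its open dense complement $G = M \setminus X$ the leaf space of $\W^c|_G$ has a nice orbifold structure with locally bounded leaf volumes. The goal is to rule out $X \neq \emptyset$.

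The first step is to use the dynamics to enlarge $X$ as much as possible. Since $f$ preserves $\W^c$ and distorts volume on compact sets by a bounded factor, $X$ is $f$-invariant, and so is $G$. Using the continuity of unstable holonomy between center leaves (well-defined in the dynamically coherent case) together with the exponential contraction of $f^{-n}$ along $\W^u$, one expects to conclude that $X$ is $\W^u$-saturated: for $x \in X$ and $y \in \W^u(x)$, the pathology of $\vol$ near $x$ can be transported to a pathology near $y$ via local holonomy after enough backward iteration. By the symmetric argument $X$ is also $\W^s$-saturated, so $\W^{cu}(x) \subset X$ for every $x \in X$.

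The heart of the proof uses $\dim E^u = 1$: inside $\W^{cu}(x)$, the restricted foliation $\W^c|_{\W^{cu}(x)}$ has codimension one, and a small arc $\gamma \subset \W^u(x)$ through $x$ serves as a local transversal. The volume function along $\gamma$ is lower semi-continuous (a standard property of compact foliations), so the sub-level sets $A_k = \{y \in \gamma : \vol(\W^c(y)) \leq k\}$ are closed in $\gamma$ and cover it. By Baire category, some $A_{k_0}$ contains a sub-arc $\gamma_0 \subset \gamma$ on which the leaf volume is bounded by $k_0$. Applying the structure theory of codimension one compact foliations (Reeb stability, local triviality around leaves of finite holonomy) to $\W^c|_{\W^{cu}(x)}$ near $\gamma_0$, one obtains a $\W^c$-saturated open subset of $\W^{cu}(x)$ with uniformly bounded leaf volume. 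Combined with $\W^s$-saturation of $X$ from the previous step, this promotes to an open subset of $M$ on which $\vol$ is bounded -- an open subset that intersects $\gamma_0 \subset \W^u(x) \subset X$, contradicting the definition of $X$.

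The principal obstacle is the third step: promoting the Baire-type bound on the one-dimensional transversal $\gamma_0$ into a full open neighborhood in $M$. The codimension-one hypothesis inside $\W^{cu}$ is what makes this feasible, enabling Reeb-stability arguments unavailable in higher codimension, but because $\W^{cu}(x)$ is typically non-compact the classical closed-manifold statements do not apply directly. I expect the proof to use $f$-iteration (recurrence or forward magnification of a local bound) together with upper semi-continuity of holonomy covering numbers to transport the boundedness along $\W^{cu}$-leaves, in the spirit of the Bohnet--Bonatti and Carrasco analyses of bad leaves in compact center foliations.
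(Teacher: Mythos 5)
The decisive gap is the sentence ``by the symmetric argument $X$ is also $\W^s$-saturated.'' There is no symmetry to invoke: the hypothesis is $\dim(E^u)=1$ while $\dim(E^s)$ is arbitrary, and the mechanism that makes unstable saturation work in the paper is genuinely codimension-one. The paper first proves that $\Hol^u(\C)$ is \emph{trivial} for every center leaf (an orientation-preserving germ on a one-dimensional unstable transversal with a non-fixed point would force a compact center leaf to cross infinitely many plaques of a foliation box), then uses the resulting local product structure $\Ball^s_\delta(\C)\times(-1,1)$ to conjugate the stable holonomy groups along unstable arcs, and finally invokes Epstein's criterion ($\C\subset\B \Leftrightarrow |\Hol(\C)|=\infty$) to conclude that $\B$ is $\W^{cu}$-saturated. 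On the stable side this breaks down completely: leaves of $\B$ have \emph{infinite} stable holonomy (that is exactly what puts them in $\B$ once $\Hol^u$ is trivial), so there is no finite holonomy to transport along $\W^s$. Note also that your heuristic for $u$-saturation (``transport the pathology by backward iteration'') does not close as stated: to return to a fixed small neighborhood of $y$ one must apply $f^{n}$, which expands along $E^u$ and destroys the locality of the unboundedness; the holonomy/Epstein route above is what actually makes this step work, and it is where $\dim(E^u)=1$ enters.

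Even granting $\W^{cu}$-saturation of the bad set, the proposed endgame cannot produce a contradiction. Inside $W^{cu}(x)$ the restricted foliation $\W^c|_{W^{cu}(x)}$ is, near any center leaf, a product $\C\times(-\delta,\delta)^u$ (again by triviality of $\Hol^u$), so the volume function along your unstable transversal $\gamma$ is already locally well behaved; a Baire/Reeb bound on a subarc $\gamma_0$ is therefore in no tension with $\gamma_0\subset X$, because membership in the bad set is caused by unboundedness in the \emph{stable} direction. To contradict $x\in X$ you would need $\vol$ bounded on an open subset of $M$, i.e.\ finiteness of the stable holonomy near $\C_x$ --- precisely what fails on $\B$ --- and ``$\W^s$-saturation of $X$'', even if it held, gives no such control. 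The paper's actual continuation is global rather than local: $\B$ is a proper attractor (so $f$ is not transitive), one forms the dual repeller $\R=M\setminus\bigcup_{x\in\B}W^s(x)$, proves it is $\W^c$- and hence $\W^{cs}$-saturated with uniformly bounded center volume, shows its center-stable leaves are complete and fiber over a compact center leaf with stable fibers and trivial holonomy, and then runs a Hiraide-type (Franks--Newhouse) argument to exclude the codimension-one repeller $\partial\R$. None of that machinery is replaced by the local argument you propose, so the proof as sketched does not go through.
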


In \cite{Boh} Bohnet has studied the case where the volume of the center leaves is uniformly bounded and $\dim(E^u)=1$. Combining her results with our main theorem yields:

\begin{cor}\label{cor1}
Let $f:M\to M$ be a dynamically coherent partially hyperbolic diffeomorphism with compact center foliation $\W^c$. If $\dim(E^u)=1$ then, modulo taking a double cover, the leaf space $M/\W^c$ is a torus $\mathbb{T}^d$ and the dynamics $F:M/\W^c\to M/\W^c$ induced by $f$ is topologically conjugate to an Anosov automorphism on $\mathbb{T}^d$, where $d=codim(\W^c)$.
\end{cor}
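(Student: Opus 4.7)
My plan is to derive the corollary as a direct combination of the Main Theorem with the structural results of Bohnet in \cite{Boh}, which treat exactly the uniformly bounded case in one-dimensional unstable codimension. The Main Theorem is doing the heavy lifting; what remains is a packaging argument, plus a classical rigidity step at the end.

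The first step is to apply the Main Theorem to $f$: dynamical coherence, compactness of $\W^c$, and $\dim(E^u)=1$ give that the volumes of the center leaves are uniformly bounded on $M$. This is precisely the hypothesis under which Bohnet works. By Epstein's theorem applied to uniformly compact foliations, the leaf space $M/\W^c$ is a compact Hausdorff orbifold of dimension $d=\mathrm{codim}(\W^c)$, the holonomy groups of center leaves are finite, and $f$ descends to a homeomorphism $F\colon M/\W^c\to M/\W^c$.

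Next I would explain the double cover. In the setting $\dim(E^u)=1$, Bohnet's analysis shows that the possible orbifold singularities in $M/\W^c$ come from the possible non-orientability of the one-dimensional unstable bundle $E^u$ along center leaves. Pulling $f$ back to the double cover $\widetilde M \to M$ that orients $E^u$ (and hence the unstable foliation $\W^u$), one obtains a dynamically coherent partially hyperbolic diffeomorphism $\widetilde f\colon \widetilde M \to \widetilde M$ whose center foliation is still compact with uniformly bounded leaf volume; the quotient $N := \widetilde M /\widetilde{\W}^c$ is now a genuine closed topological manifold of dimension $d$, and $\widetilde f$ descends to a homeomorphism $\bar f\colon N\to N$.

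Finally I would identify $\bar f$ with a hyperbolic toral automorphism. Again by \cite{Boh}, the projection of $\W^u$ endows $\bar f$ with a one-dimensional uniformly expanding direction and the projection of $\W^{cs}$ with a $(d-1)$-dimensional uniformly contracting direction satisfying a local product structure; this makes $\bar f$ a codimension-one topological Anosov homeomorphism on the closed $d$-manifold $N$. Classical rigidity results for codimension-one Anosov systems (Franks--Newhouse) then force $N\cong \TT^d$ and $\bar f$ to be topologically conjugate to a hyperbolic toral automorphism. The only non-routine ingredient in the whole chain is the uniform volume bound supplied by the Main Theorem; everything else is either already in \cite{Boh} or is classical rigidity, so the principal obstacle is located upstream of this corollary rather than within it.
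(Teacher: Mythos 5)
Your proposal is correct and follows exactly the paper's own route: the Main Theorem supplies the uniform bound on center leaf volumes, and the corollary is then read off from Bohnet's structure theorem \cite{Boh} for the uniformly compact, codimension-one case (whose internals --- the double cover orienting $E^u$, the quotient Anosov homeomorphism, and the Franks--Newhouse rigidity --- you summarize accurately but need not reprove). The paper states this combination without further argument, so your elaboration is, if anything, more detailed than what appears there.
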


Observe that Theorem \ref{main} and Corollary \ref{cor1} are valid as well if $\dim(E^s)=1$ by working with $f^{-1}$ instead of $f$.

In \cite{Gog} Gogolev proved that compact center foliations are uniformly compact under the assumptions $\dim(E^c)=1$, $\dim(E^s)\leq 2$ and $\dim(E^u)\leq 2$. Combining this result with our main theorem then yields:

\begin{cor} Let $f:M\to M$ be a dynamically coherent partially hyperbolic diffeomorphism with compact center foliation. If $\dim(M)\leq 5$ then the volume of the center leaves is uniformly bounded. 
\end{cor}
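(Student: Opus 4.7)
The plan is a case analysis on the triple $(\dim E^s, \dim E^c, \dim E^u)$. Since each subbundle is at least one-dimensional and the three dimensions sum to $\dim(M) \leq 5$, there are only finitely many possibilities to consider. I would organise them by noting that \emph{either} at least one of $\dim(E^s)$, $\dim(E^u)$ equals $1$, in which case Theorem \ref{main} applies directly (recall the remark after Corollary \ref{cor1} that the conclusion holds with $\dim(E^s)=1$ by passing to $f^{-1}$), \emph{or} both $\dim(E^s) \geq 2$ and $\dim(E^u) \geq 2$.

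In the second alternative, combined with $\dim(E^c) \geq 1$ and $\dim(M) \leq 5$, the only possibility is $(\dim E^s, \dim E^c, \dim E^u) = (2,1,2)$. But then $\dim(E^c)=1$ and $\dim(E^s), \dim(E^u) \leq 2$, which is precisely the hypothesis of Gogolev's theorem from \cite{Gog}, so uniform compactness of $\W^c$ follows from that result.

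Enumerating the remaining cases just for the record: in dimension $3$ the only triple is $(1,1,1)$; in dimension $4$ we have $(1,1,2)$, $(1,2,1)$, $(2,1,1)$; and in dimension $5$ we have $(1,1,3)$, $(1,2,2)$, $(1,3,1)$, $(2,2,1)$, $(3,1,1)$, all of which contain a one-dimensional strong bundle and thus fall under Theorem \ref{main}, and finally $(2,1,2)$, which is handled by \cite{Gog}. Every case is covered, so the center foliation has uniformly bounded volume.

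There is no substantive obstacle here beyond the case enumeration, since the two ingredients (Theorem \ref{main} and Gogolev's theorem) together exhaust the possible dimension triples up to $\dim(M)=5$. The only subtlety worth stating explicitly is the symmetric form of Theorem \ref{main} under $f \leftrightarrow f^{-1}$, which is what lets us cover the triples where only $\dim(E^s)=1$.
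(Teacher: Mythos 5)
Your proposal is correct and is essentially the paper's own argument: the corollary is obtained precisely by combining Theorem \ref{main} (applied to $f$ or to $f^{-1}$ when one of the strong bundles is one-dimensional) with Gogolev's result \cite{Gog}, which covers the only remaining triple $(\dim E^s,\dim E^c,\dim E^u)=(2,1,2)$. Your case enumeration is complete, so nothing further is needed.
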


In particular, with the second corollary we can completely discard to be a center foliation the Sullivan's example ~\cite{S} of a foliation by circles in a 5-manifold with unbounded length of leaves. Note that for this specific example the result is, as far as we are aware, new. In Remark \ref{NoSullivan} we give a direct proof for this example without the need of the results from section \ref{four}.

\subsection{Organization of the paper and structure of the proof}

In section \ref{preliminaries} we give some preliminaries from partially hyperbolic dynamics  and foliation theory. In particular, we address the topic of compact foliations and review some definitions and results that will be useful in the proof of the main theorem.

In sections \ref{Section3} and \ref{four} we give the proof of the main theorem, and the structure is as follows.

The proof will be by contradiction. Assume that the center foliation $\W^c$ has not uniformly bounded volume of leaves. This is equivalent with the \emph{bad set}
$$\B=\{x:\text{ center leaf volume function is not locally bounded at } x\}$$ being not empty (see section \ref{compactfoliations}).

The main result of section \ref{Section3} is that $\B$ is saturated by the center-unstable foliation:  as the unstable holonomy of center leaves is trivial (see Lemma \ref{lemma1}), the stable holonomy groups of points in the same unstable leave are conjugated (see Lemma \ref{lemma2}). We deduce that $\B$ is an attactor. In particular, this implies that $f$ cannot be transitive.  

Section \ref{four} is dedicated to rule out the non transitive case. 

The attractor $\B$ induces an associated repeller $\R$. We first see that $\R$ is saturated by the center foliation, implying that the center leaf volume function is bounded in $\R$ (see section \ref{Rconstruction}). 

We then give a sort of topological description of center-stable leaves in $\R$, namely, all of them are bundles over a center leaf with stable manifolds as fibers, thus having trivial transverse holonomy (see section \ref{completeandbundle}). 

Finally, this allow us to adapt Hiraide arguments \cite{Hir} (see also Bohnet \cite{Boh}) in order to discard the existence of the codimension one transversally unstable repeller $\partial \R$.

\subsection{Acknowledgements} The authors are greately thankful with their advisor Rafael Potrie. This work could not be possible without his advice, patience and generosity. 

The authors would also like to thank Christian Bonatti, Sylvain Crovisier, Sergio Fenley, Andrey Gogolev, Andy Hammerlindl and Martín Sambarino for their generous time and helpful conversations.

\section{Preliminaries}\label{preliminaries}

\subsection{Preliminaries from partially hyperbolic dynamics} 
Let $f:M\to M$ be a $C^1$ diffeomorphism on $M$ a closed Riemannian manifold. We say that $f$ is \emph{partially hyperbolic} if there exists a continuous and $Df$-invariant decomposition $$TM=E^s\oplus E^c\oplus E^u,$$ and some $\ell>0$ such that for every $x\in M$ and unit vectors $v^\sigma\in E^\sigma$ for $\sigma\in \{s,c,u\}$, one has that:
\begin{center} $\|D_xf^{\ell}(v^s)\|<1$, \hspace{0.2cm} $\|D_xf^{-\ell}(v^u)\|<1$\hspace{0.2cm}  and \end{center}

\begin{center} $\|D_xf^{\ell}(v^s)\|<\|D_xf^{\ell}(v^c)\|<\|D_xf^{\ell}(v^u)\|.$ \end{center}

We call \emph{stable}, \emph{center-stable}, \emph{center}, \emph{center-unstable} and \emph{unstable bundle} to $E^s,E^s\oplus E^c,E^c,E^c\oplus E^u$ and $E^u$, respectively.

The stable and unstable bundles are known to be uniquely integrable (see e.g. \cite{HPS}) to foliations $\W^s$ and $\W^u$, respectively. However, the center, center-stable or center-unstable bundles may not integrate. 

We say that a partially hyperbolic diffeomorphism is \emph{dynamically coherent} if the center-stable and center-unstable bundles integrate to $f$-invariant foliations $\W^{cs}$ and $\W^{cu}$, respectively. In particular, this implies that the center bundle is also integrable: the center leaf through a point $x\in M$ being the connected component of $W^{cs}(x)\cap W^{cu}(x)$ that contains $x$. The resulting foliation, $\W^c$, is then also $f$-invariant and tangent to the center bundle. For more information and context on this topics see e.g. \cite{RHRHU}.

Given a point $x\in M$ we denote as $W^\sigma(x)$ the leaf of $\W^\sigma$ through $x$ for $\sigma\in \{s,cs,cu,u\}$. We denote $\C_x$ the leaf of $\W^c$ through $x$. 

Given a center leaf $\C$, we denote $W^\sigma(\C)$ the leaf of $\W^\sigma$ through $\C$ for $\sigma\in\{cs,cu\}$.

For $x\in M$ and $r>0$, we denote $\Ball^\sigma_r(x)\subset W^\sigma(x)$ the intrinsic ball of center $x$ and radius $r>0$ in $W^\sigma(x)$ for $\sigma \in \{s,cs,c,cu,u\}$. If $\dim(E^{\sigma})=1$ we will simply denote it $(x-r,x+r)^{\sigma}$. If $\dim(E^{\sigma})=1$ and $\W^{\sigma}$ is oriented, we denote $(x,x+\delta)^u=\{ y\in \Ball^u_{\delta}(x):x<y\}$ and $W^u_+(x)=\{ y\in W^u(x):x<y\}$, and, if $y\in\W^u(x)$, we denote by $[x,y]^u$ and $(x,y)^u$ the oriented closed and open segments in $\W ^u(x)$, respectively, from $x$ to $y$.

Leaves of $\W^s$ and $\W^u$ can be obtained as increasing union of balls, thus, are homeomorphic to $\mathbb{R}^{\dim E^s}$ and $\mathbb{R}^{\dim E^u}$, respectively.

\subsection{Preliminaries from foliation theory} We will consider continuous foliations with $C^1$-leaves tangent to a continuous distribution. A general reference for this section is \cite{CC}.

\subsubsection{Holonomy of a leaf} 

Let us briefly recall the definition of the holonomy group of a leaf.

Consider $W$  a leaf in a foliation $\W$ of codimension $q$. Fix $x_0$ a point in $W$ and let $D$ be a disk of dimension $q$ transversal to $\W$ through $x_0$.

For every loop $\gamma:[0,1]\to W$ based on $x_0$ one can consider $h_\gamma :D' \to D$ the \emph{holonomy return map} to $D$ of the leaves of $\W$ through points from a smaller transversal disk $D'\subset D$. Consider adequate small transversal disks $D_{\gamma(t)}$ through each point $\gamma(t)$ and, given $y$ in $D'=D_{\gamma(0)}$, define $h_\gamma(y)$ as the end point of the continuous curve $\gamma_y$ defined by $\gamma_y(t)\in D_{\gamma(t)}\cap W(y)$ and $\gamma_y(0)=y$.

For a local homeomorphism $h:D'\subset D\to D$ fixing $x_0$ one defines the \emph{germ of $h$} as the class of all local homeomorphisms that coincides with $h$ in a neighborhood of $x_0$. With the operation given by the composition, this classes form $G(x_0,D)$ the \emph{group of germs of local homeomoprhisms at $x_0$}.

One can see that for basepoint fixed homotopic curves in $W$ based at $x_0$ there corresponds the same holonomy germ. Since concatenation of curves corresponds to composition of holonomy maps (whenever well defined) one has a well defined homomorphism
$$\phi:\pi_1(W,x_0)\to G(x_0,D)$$
where $\pi_1(W,x_0)$ is the fundamental group of $W$ based in $x_0$.

Define the \emph{holonomy group of $W$ at $x_0$} as $\phi(\pi_1(W,x_0))$. The isomorphism class of $\phi(\pi_1(W,x_0))$ does not depend of $x_0$ or $D$, so we call it \emph{holonomy group of $W$} and denote it by $\Hol(W)$.

\subsubsection{Reeb stability}

This next theorem is a classical in foliation theory. See e.g. \cite[Theorem 2.4.3]{CC} and \cite[Theorem 3.]{CLN}.

\begin{teo}[Generalized Reeb stability theorem]\label{Reeb}
Let $W$ be a compact leaf in a foliation $\W$ such that $W$ has a finite holonomy group $\Hol(W)$. Then there exists $\U(W)$ a neighbourhood of $W$, saturated by leaves of $\W$, in which all leaves of $\W$ are compact with finite holonomy group. Moreover, $\U(W)$ has an associated projection $\pi:\U(W)\to W$ such that for every $W' \subset \U(W)$ the map $\pi|_{W'}:W'\to W$ is a finite covering with $k$ sheets, $k\leq |\Hol(W)|$, and for each $y\in W$ the set $\pi ^{-1}(\{y\})$ is a disk transversal to $\W$. The neighbourhood $\U(W)$ can be taken to be arbitrarily small. 
\end{teo}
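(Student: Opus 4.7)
The plan is to reduce to the case of trivial holonomy by passing to a finite cover of $W$, build a product neighborhood in that easier setting, and then descend by quotienting by the finite holonomy group.

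First, I would fix $x_0\in W$ and a small transversal disk $D$ through $x_0$. Writing $\Hol(W)=\{h_1,\ldots,h_k\}$ and shrinking $D$, I can arrange that each $h_i$ is represented by a homeomorphism $D\to D$ fixing $x_0$, so that $H:=\Hol(W)$ genuinely acts as a finite group of homeomorphisms of $D$. Let $K:=\ker\phi\lhd\pi_1(W,x_0)$, a normal subgroup of index $|H|$, and let $p:\hat W\to W$ be the corresponding finite covering. Pulling back $\W$ via $p$ gives a foliation $\hat\W$ on a manifold $\hat M$ that covers a neighborhood of $W$ in $M$, and by construction the lifted leaf $\hat W$ has trivial holonomy, since every loop in $\hat W$ projects to an element of $K$.

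Second, the crucial step is to show that a compact leaf with trivial holonomy has a saturated product neighborhood. I would cover $\hat W$ by finitely many foliation charts with a Lebesgue-type lower bound on plaque diameters, and then check that for every $y$ sufficiently close to $x_0$ in $D$ the plaque-chain continuation of $y$ along any loop in $\hat W$ is well defined and independent of the loop (thanks to trivial holonomy). Compactness of $\hat W$ together with a standard covering-space argument produces, for each such $y$, a leaf $\hat L_y$ realised as the graph of a continuous section of the trivial transversal bundle. Gluing these graphs yields a homeomorphism $\hat\U\cong\hat W\times D$ carrying $\hat\W|_{\hat\U}$ to the foliation by slices $\hat W\times\{t\}$.

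Finally, the deck group $H\cong\pi_1(W,x_0)/K$ acts on $\hat W\times D$ diagonally, by deck transformations on the first factor and by the original holonomy action on $D$. The quotient $(\hat W\times D)/H$ embeds as an open saturated neighborhood $\U(W)$ of $W$ in $M$, and the projection onto the first factor descends to a map $\pi:\U(W)\to W$. A leaf through $y\in D$ corresponds to the $H$-orbit of $\hat W\times\{y\}$, i.e.\ to $\hat W/\mathrm{Stab}_H(y)$, which is a finite cover of $W$ of degree $|H|/|\mathrm{Stab}_H(y)|\leq |H|$, with holonomy group contained in $\mathrm{Stab}_H(y)$ and thus finite. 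Shrinking $D$ shrinks $\U(W)$ accordingly, giving arbitrarily small neighborhoods. The main obstacle is the product-structure claim in the second step: trivial holonomy only gives local triviality pointwise, and upgrading this to a global homeomorphism $\hat\U\cong\hat W\times D$ requires compactness of $\hat W$ together with a careful plaque-chain continuation argument to rule out nearby leaves spiralling or accumulating non-trivially. Once that is established, the descent by the finite group $H$ is essentially formal.
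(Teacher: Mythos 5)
The paper does not actually prove this theorem: it is quoted as a classical result, with references to \cite[Theorem 2.4.3]{CC} and \cite[Theorem 3.]{CLN}. Your sketch is essentially the standard proof found in those references: realize the finite holonomy group as a genuine finite group $H$ of homeomorphisms of an invariant transversal disk, pass to the holonomy covering $\hat{W}$ associated to $\ker\phi$ (which has trivial holonomy), obtain a product neighbourhood $\hat{W}\times D$ by plaque-chain continuation, and descend to the flat-bundle model $(\hat{W}\times D)/H$, from which all the stated conclusions (compactness of nearby leaves, coverings of $W$ with at most $|\Hol(W)|$ sheets, disk fibres of $\pi$, arbitrarily small saturated neighbourhoods) follow by the orbit computation you indicate. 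The only two steps that should be written out rather than asserted are (i) that a finite group of holonomy germs can indeed be represented by an honest finite group action on a sufficiently small invariant transversal (the usual shrinking/invariance trick), and (ii) that the trivialization of the neighbourhood of $\hat{W}$, constructed via continuation over finitely many generators of $\pi_1(\hat{W},\hat{x}_0)$, is equivariant under the deck group --- this is what legitimizes the diagonal description of the $H$-action and hence the ``formal'' descent; both are standard and your outline accommodates them.
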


\subsubsection{Compact foliatons}\label{compactfoliations} 

We say that a foliation $\W$ is \emph{compact} if every leaf $W$ of $\W$ is compact.

Given a compact foliation $\W$ and a Riemannian metric in $M$ we can consider the \emph{volume function} $$\mathrm{vol}:M\to [0,+\infty)$$ that assigns to each point $x\in M$ the volume of the leaf $W(x)$ with respect to the metric in $W(x)$ induced by the metric of $M$.

It may be the case that a compact foliaton does not have uniformly bounded volume of leaves, meaning that $\mathrm{vol}$ is an unbounded function (see for example \cite{S} or \cite{EV}). 

Given a compact foliation $\W$, define the \emph{bad set of $\W$} as $$\B:=\{x\in M: \mathrm{vol} \text{ is not locally bounded at } x\}.$$

The set $\B$ does not depend on the choice of the metric in $M$.

\begin{obs}
The fact that $\W$ has uniformly bounded volume of leaves is equivalent to $\B$ being the empty set.
\end{obs}

\begin{obs}
Observe that if $\W$ is invariant by a $C^1$ diffeomorphism $f$ then $\B$ is invariant by $f$ (namely, $f(\B)=\B$). This is going to be used in our context where $\W$ will be the center foliation of a partially hyperbolic diffeomorphism.
\end{obs}

Let us see some of the properties of $\B$.
For a proof of the following result see, for example, Epstein \cite{Eps} or Lessa \cite{Les}:

\begin{prop} Let $\W$ be a compact foliation. Then the volume function is lower semicontinuous. That is, $\liminf_{x_n\rightarrow x} \mathrm{vol}(W(x_n)) \geq \mathrm{vol}(W(x))$.
\end{prop}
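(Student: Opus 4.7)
The plan is to compare $\vol(W(y))$ with $\vol(W(x))$ for $y$ near $x$ by using a finite cover of the compact leaf $L := W(x)$ by foliation charts together with a subordinate continuous partition of unity. Since $L$ is compact, we can cover it by finitely many charts $V_1,\ldots,V_k$, each of the form $V_i \cong B_i \times T_i$ with $B_i$ a plaque disk and $T_i$ a transversal, and such that each intersection $L\cap V_i$ consists of finitely many plaques $P_i^{(1)},\ldots,P_i^{(m_i)}$ with distinct transverse coordinates $t_i^{(1)},\ldots,t_i^{(m_i)}\in T_i$. Choose continuous functions $\rho_i:M\to [0,1]$ with $\mathrm{supp}(\rho_i)\subset V_i$, $\sum_i \rho_i\leq 1$ everywhere, and $\sum_i \rho_i\equiv 1$ on $L$.

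The key step is to show that for $y$ sufficiently close to $x$ and for every pair $(i,j)$, the intersection $W(y)\cap V_i$ contains a plaque $Q_i^{(j)}(y)$ whose transverse coordinate is close to $t_i^{(j)}$. To produce such a plaque, fix once and for all a path $\gamma_{i,j}$ in $L$ from $x$ to a point of $P_i^{(j)}$, and apply the holonomy of $\W$ along $\gamma_{i,j}$: this holonomy is a homeomorphism between neighborhoods of the endpoints on fixed transversals, and $y$ (viewed via its transverse coordinate near $x$) is sent to a point of $W(y)$ with transverse coordinate in $T_i$ close to $t_i^{(j)}$. For $y$ close enough to $x$ the resulting plaques $Q_i^{(j)}(y)$ are pairwise distinct for fixed $i$ (since the $t_i^{(j)}$ are), and $Q_i^{(j)}(y)\to P_i^{(j)}$ as $y\to x$ through the chart parameterization; consequently $\int_{Q_i^{(j)}(y)} \rho_i \,d\vol \to \int_{P_i^{(j)}} \rho_i \,d\vol$ by continuity of $\rho_i$ and of the induced plaque volume form.

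Combining these observations yields
\[
\vol(W(y)) \;\geq\; \int_{W(y)} \sum_i \rho_i \,d\vol \;\geq\; \sum_{i,j}\int_{Q_i^{(j)}(y)} \rho_i\,d\vol,
\]
where the last inequality uses that for each fixed $i$ the $Q_i^{(j)}(y)$ are disjoint subsets of $W(y)\cap V_i$ and $\rho_i$ is supported in $V_i$. Since the sum over $(i,j)$ is finite, taking $\liminf$ as $y\to x$ and applying the convergence above gives
\[
\liminf_{y\to x}\vol(W(y))\;\geq\; \sum_{i,j}\int_{P_i^{(j)}}\rho_i\,d\vol \;=\; \int_L \sum_i \rho_i\,d\vol \;=\; \vol(L),
\]
which is the desired lower semicontinuity.

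The main obstacle is the second paragraph: ensuring that every plaque of $L$ is matched by a distinct plaque of $W(y)$. One cannot simply argue that $W(y)$ lies in a tubular neighborhood of $L$, since a priori $L$ may have nontrivial holonomy and nearby leaves can have very different global topology; the use of holonomy along explicit finite paths circumvents this by producing the required local matching, which is all the partition-of-unity bound needs.
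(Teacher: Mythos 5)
Your proof is correct. The paper itself does not prove this proposition---it defers to Epstein \cite{Eps} and Lessa \cite{Les}---and your argument (finitely many plaques of the compact leaf in each of finitely many foliation boxes, matched for nearby leaves by holonomy along finitely many fixed paths, combined with a subordinate partition of unity and the continuity of the plaque volumes in the transverse coordinate) is essentially the classical proof found in those references, so there is nothing to object to.
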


Using the past proposition we can prove (see e.g. \cite{EMS}):

\begin{prop}\label{PropertiesB}
The bad set $\B$ is closed with empty interior and is saturated by leaves of $\W$.
\end{prop}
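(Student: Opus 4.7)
The proof has three independent parts, corresponding to the three asserted properties of $\B$. I would handle them in increasing order of difficulty.

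For \emph{closedness}, the cleanest route is to argue that $M\setminus \B$ is open, which is immediate from the definition: if $\mathrm{vol}$ is bounded by some constant $K$ on a neighborhood $U$ of $x$, then $U$ itself witnesses local boundedness at every one of its points, so $U \subset M\setminus \B$.

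For \emph{empty interior}, the key input is the lower semicontinuity of $\mathrm{vol}$ stated in the previous proposition. For each $n\in\NN$ the sublevel set $A_n=\{x\in M:\mathrm{vol}(W(x))\leq n\}$ is therefore closed, and $M=\bigcup_n A_n$. Suppose for contradiction that $\B$ contained a nonempty open set $U$. Then $U=\bigcup_n (A_n\cap U)$, which expresses the locally compact (hence Baire) space $U$ as a countable union of relatively closed sets. By the Baire category theorem some $A_n\cap U$ has nonempty interior in $U$, hence in $M$; but on such an open set $\mathrm{vol}$ is bounded by $n$, contradicting the inclusion into $\B$.

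The \emph{saturation} step is the one requiring real work, and is where I expect the main technical bookkeeping. Fix $x\in\B$ and $y\in W(x)$; I need to show every neighborhood $U_y$ of $y$ contains leaves of arbitrarily large volume. Using the compactness of the leaf $W=W(x)$, I would pick a path $\gamma\subset W$ from $x$ to $y$ and cover it by finitely many foliation charts $\phi_i\colon V_i\to \RR^{\dim W}\times T_i$ chained so that consecutive plaques match up, with $V_0$ a neighborhood of $x$ and $V_N\subset U_y$ a neighborhood of $y$. The standard chain-of-charts argument then produces a neighborhood $U_x\subset V_0$ of $x$ such that for every $z\in U_x$, the leaf $W(z)$ meets $U_y$ (one propagates the transversal coordinate across the successive charts). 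Because $x\in\B$, there is a sequence $z_n\in U_x$ with $\mathrm{vol}(W(z_n))\to\infty$; picking any $z_n'\in W(z_n)\cap U_y$ yields points in $U_y$ with $\mathrm{vol}(W(z_n'))=\mathrm{vol}(W(z_n))\to\infty$. Since $U_y$ was arbitrary, $y\in\B$, and saturation follows.

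The only delicate point above is the construction of the chain of charts and the verification that nearby leaves really do track $\gamma$ all the way from $x$ to $y$; this is standard for continuous foliations and I would invoke it via the long-flowbox/plaque-chain description in \cite{CC} rather than unfold the combinatorics in detail.
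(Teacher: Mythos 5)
Your proof is correct and follows essentially the same route as the paper: closedness is immediate from the definition, empty interior comes from lower semicontinuity of $\mathrm{vol}$ via a Baire-category argument (the paper phrases this as ``a semicontinuous function is continuous on a residual set'' while you use closed sublevel sets, but these are the same underlying fact), and saturation is the plaque-chain/continuity-of-the-foliation argument that the paper states in one sentence and you spell out in more detail. No gaps; your write-up is, if anything, more careful than the paper's.
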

\begin{proof}

The set $\B$ is clearly closed. 

Semicontinuous functions are continuous in a residual set and $\B\subset\{x\in M: \mathrm{vol} \text{ is not continuous in }x\}$. Thus, from $\mathrm{vol}$ being lower semicontinuous we deduce that $\B$ has empty interior.

The fact that the volume function is constant along leaves implies that $\B$ is saturated by leaves of $\W$: if $x\in \B$ then $x$ has arbitrarily large leaves arbitrarily close to it, so if $y\in W(x)$ then this arbitrarily long leaves pass close to $y$ as well (due to the continuity of the foliation $\W$), meaning that $y$ also belongs to $\B$.
\end{proof}

Leaves in $\B$ can be completely caracterized in terms of holonomy (see e.g. Epstein \cite[Theorem 4.2.]{Eps}):

\begin{prop}\label{Holinfty}
Let $\W$ be a compact foliation and $W$ a leaf of $\W$. Then $W\subset \B$ if and only if $|\Hol(W)|=\infty$.
\end{prop}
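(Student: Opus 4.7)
The plan is to prove the two directions separately.

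For the implication $|\Hol(W)|<\infty\Rightarrow W\not\subset\B$, I would apply Theorem~\ref{Reeb} directly. Reeb stability provides a saturated neighborhood $\U(W)$ in which every leaf $W'$ is a compact finite cover of $W$ of degree at most $|\Hol(W)|$, via a projection $\pi|_{W'}:W'\to W$. Since the leaves are $C^1$ and $T\W$ varies continuously, the Jacobian of the inverse of $\pi|_{W'}$ is uniformly bounded on a small enough $\U(W)$. Hence $\mathrm{vol}(W')\leq C\,|\Hol(W)|\,\mathrm{vol}(W)$ for every $W'\subset\U(W)$, showing that $\mathrm{vol}$ is locally bounded near $W$ and so $W\cap\B=\emptyset$.

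For the converse, I would proceed by contrapositive. Assuming $W\not\subset\B$, Proposition~\ref{PropertiesB} gives $W\cap\B=\emptyset$, so $\mathrm{vol}$ is locally bounded at every point of $W$. Compactness of $W$ then produces a saturated neighborhood $U$ of $W$ and a constant $K>0$ with $\mathrm{vol}(W(y))\leq K$ for all $y\in U$. Fix $x_0\in W$ and a small foliation chart at $x_0$ with transversal disk $D$. Continuity of $\W$ ensures every plaque in this chart has volume at least some $v_0>0$; hence for $y\in D$ close to $x_0$, the compact leaf $W(y)$ meets the chart in plaques hitting $D$ exactly once, giving
\[
|W(y)\cap D|\leq \mathrm{vol}(W(y))/v_0 \leq K/v_0 =: N.
\]
Since the $\Hol(W)$-orbit of any such $y$ is contained in $W(y)\cap D$, every orbit near $x_0$ has cardinality at most $N$.

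The main obstacle is upgrading this uniform orbit bound to finiteness of the group of germs $\Hol(W)$. This is the classical argument due to Epstein \cite{Eps}: one uses that $\pi_1(W,x_0)$, and hence $\Hol(W)$, is finitely generated since $W$ is compact, combined with the size bound $N$ on orbits near $x_0$, to conclude that $\Hol(W)$ injects into a finite permutation group. The key input is that two distinct germs of local homeomorphisms at $x_0$ cannot agree on every point of a sequence $y_n\to x_0$ chosen along an orbit of maximal size, so the kernel of the permutation action on such an orbit consists only of the identity germ. This identifies $\Hol(W)$ with a subgroup of the symmetric group on at most $N$ letters, giving $|\Hol(W)|\leq N!<\infty$. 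I would follow this argument, whose technical details are carried out in \cite{Eps}.
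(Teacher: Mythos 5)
First, a point of comparison: the paper does not actually prove this proposition, it quotes it from Epstein \cite[Theorem 4.2]{Eps}, so the only thing to compare your attempt with is Epstein's argument itself. Your first direction is fine: Theorem \ref{Reeb} gives a saturated neighbourhood in which every leaf is a covering of $W$ with at most $|\Hol(W)|$ sheets, the Jacobian estimate bounds $\mathrm{vol}$ there, and saturation of $\B$ (Proposition \ref{PropertiesB}) turns ``$W\cap\B=\emptyset$'' into ``$W\not\subset\B$''. Your reduction in the converse direction is also correct as far as the orbit bound: local boundedness of $\mathrm{vol}$ at $x_0$ together with a lower bound for the volume of plaques gives $|W(y)\cap D|\le N$ for $y$ in a small transversal $D$, hence every holonomy orbit near $x_0$ has at most $N$ points.

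The gap is precisely in what you call the key input. It is \emph{not} true that two distinct germs at $x_0$ cannot agree on every point of a sequence $y_n\to x_0$: a homeomorphism of $D$ fixing $x_0$ and a sequence accumulating at $x_0$, while differing from the identity in between, has a nontrivial germ that agrees with the identity along that sequence. Hence the permutation action of $\Hol(W)$ on a single orbit of maximal size need not be faithful, and the conclusion $|\Hol(W)|\le N!$ does not follow as you state it. The standard repair (and, in substance, Epstein's route) uses all orbits simultaneously: a nontrivial germ must move some point in \emph{every} neighbourhood of $x_0$, so the homomorphisms from $\Hol(W)$ to the permutation groups of the various orbits (each of order at most $N!$) separate elements; since $\Hol(W)$ is finitely generated, being a quotient of $\pi_1(W,x_0)$ with $W$ compact, it has only finitely many subgroups of index at most $N!$, and their intersection is then simultaneously of finite index and trivial, which gives $|\Hol(W)|<\infty$. (One must also take care that the relevant compositions of holonomy maps are defined at the points of these orbits; this is part of what Epstein's proof supplies.) If your intention is simply to cite \cite{Eps} for this step, as the paper does, that is acceptable, but the shortcut you describe in its place is incorrect and would fail if carried out literally.
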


\subsection{Stable and unstable holonomies and product neighborhoods}

In this section we see particular definitions and results we will use later.

Throughout this section $f:M\to M$ will be a dynamically coherent partially hyperbolic diffeomorphism with compact center foliation.

\subsubsection{Stable and unstable holonomies}\label{holonomies}

A consequence of dynamical coherence is that each leaf $W$ of $\W^{cs}$ is foliated by center leaves. We denote the restricted foliation by $\W^c|_W$.

Given a center leaf $\C$ denote by $\Hol(\C)$ the holonomy group of $\C$ as a leaf of $\W^c$. 

Denote by $\Hol^s(\C)$ the holonomy group of $\C$ as a leaf of $\W^c|_{W^{cs}(\C)}$.

If $W$ is a leaf of $\W^{cs}$ denote by $\Hol_{cs}(W)$ the holonomy group of $W$ as a leaf of $\W^{cs}$. More generally, if $V\subset W$, we denote $\Hol_{cs}(V)$ the subgroup of $\Hol_{cs}(W)$ that corresponds to holonomy return maps along closed curves inside $V$. 

We analogously define $\Hol^u$ and $\Hol_{cu}$.

\begin{obs}\label{Hol_{cs}=Hol^u} We have that
$\Hol_{cs}(\C)\simeq \Hol^u(\C)$ and $\Hol_{cu}(\C)\simeq \Hol^s(\C)$ for every center leaf $\C$.
\end{obs}
\begin{proof}
We prove the first equality, the second one is analogous.
Let $\gamma$ be a loop in a center leaf $\C$ based in some point $x_0\in \C$. Consider $D^u(x_0)$ a small unstable disk trough $x_0$. One can consider on one hand the holonomy return map $h_\gamma$ associated to $\gamma$ of the leaf $W^{cs}(\C)$ in the foliation $\W^{cs}$ and, on the other hand, the holonomy return map $h_\gamma'$ of the leaf $\C$ in the foliation $\W^c|_{W^u(\C)}$. Dynamical coherence gives us that $h_\gamma$ coincides with $h_\gamma'$. The thesis follows. 
\end{proof}

\begin{obs}
For a set $V$ inside a center-stable leaf $W$ that deformation retracts inside $W$ to some center leaf $\C$, we have that $\Hol_{cs}(V)\simeq\Hol_{cs}(\C)\simeq\Hol^u(\C)$. For such sets we write $\Hol^u(V)$ instead of $\Hol_{cs}(V)$. Analogously for $\Hol_{cu}(V)$ and $\Hol^s(V)$.
\end{obs}

\begin{obs}
Each one of the previous holonomies is invariant by $f$.
\end{obs}

Dynamical coherence gives a relationship between the holonomy $\Hol(\C)$ and the holonomies $\Hol^s(\C)$ and $\Hol^u(\C)$. In particular, we are going to need the following (see e.g. Carrasco \cite[Proposition 2.5.]{Car}):
\begin{prop}\label{Holprod} For every center leaf $\C$ we have that $$\max\{|\Hol^s(\C)|,|\Hol^u(\C)|\}\leq |\Hol(\C)| \leq |\Hol^s(\C)||\Hol^u(\C)|.$$

In particular, $\Hol(\C)$ is finite if and only if $\Hol^s(\C)$ and $\Hol^u(\C)$ are finite.
\end{prop}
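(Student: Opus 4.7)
The strategy is to exploit the local product structure on a transversal disk to $\W^c$ induced by $\W^{cs}$ and $\W^{cu}$, and then to identify how the total holonomy of a loop in $\C$ decomposes along the stable and unstable directions.

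Fix $x_0\in\C$ and choose a small transversal disk $D$ to $\W^c$ through $x_0$ of dimension $\dim E^s + \dim E^u$. Since $\C\subset W^{cs}(\C)$ and $\C\subset W^{cu}(\C)$, we may arrange $D=D^u\times D^s$, where $D^s\subset W^{cs}(x_0)$ is a transversal to $\W^c|_{W^{cs}(\C)}$ and $D^u\subset W^{cu}(x_0)$ is a transversal to $\W^c|_{W^{cu}(\C)}$; near $x_0$ these are obtained from the local transversality of the $\W^{cs}\cap D$ and $\W^{cu}\cap D$ foliations, whose dimensions add up to $\dim D$. For a loop $\gamma$ in $\C$ based at $x_0$, the holonomy $h_\gamma:D'\to D$ of $\W^c$ preserves both $\W^{cs}\cap D$ and $\W^{cu}\cap D$ (because curves in $\C$ lie inside both $W^{cs}(\C)$ and $W^{cu}(\C)$). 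Consequently, as a germ at $x_0$ the map $h_\gamma$ splits as $h_\gamma=h^u_\gamma\times h^s_\gamma$, with $h^s_\gamma$ the holonomy of $\gamma$ in $\W^c|_{W^{cs}(\C)}$ on $D^s$, and $h^u_\gamma$ the holonomy of $\gamma$ in $\W^c|_{W^{cu}(\C)}$ on $D^u$.

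From this product description I would deduce the two inequalities as follows. For the upper bound, the assignment $[h_\gamma]\mapsto ([h^s_\gamma],[h^u_\gamma])$ defines a group homomorphism $\Hol(\C)\to \Hol^s(\C)\times \Hol^u(\C)$; it is injective because, as germs, $h_\gamma$ is trivial if and only if both factors $h^s_\gamma$ and $h^u_\gamma$ are trivial. This yields $|\Hol(\C)|\leq |\Hol^s(\C)||\Hol^u(\C)|$. For the lower bound, the projections onto each factor give well-defined homomorphisms $\Hol(\C)\to \Hol^s(\C)$ and $\Hol(\C)\to\Hol^u(\C)$, and they are surjective since, by definition, $\Hol^s(\C)$ (resp. $\Hol^u(\C)$) is generated by the germs $[h^s_\gamma]$ (resp. $[h^u_\gamma]$) as $\gamma$ ranges over loops in $\C$ based at $x_0$. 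Hence $\max\{|\Hol^s(\C)|,|\Hol^u(\C)|\}\leq |\Hol(\C)|$.

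The only subtle point I anticipate is justifying rigorously that $h_\gamma$ splits as a product of germs. I would handle this by carrying the product structure $D=D^u\times D^s$ along the loop, choosing at each $\gamma(t)$ a foliation chart in which the local leaves of $\W^{cs}$ and $\W^{cu}$ are the horizontal and vertical slices; since $\W^c$ refines both $\W^{cs}$ and $\W^{cu}$, the continuation of a point $(y^u,y^s)\in D$ along $\gamma$ stays in the same $\W^{cs}$-leaf (so its $D^u$-coordinate depends only on $y^u$) and in the same $\W^{cu}$-leaf (so its $D^s$-coordinate depends only on $y^s$), giving the claimed splitting. The second statement (finiteness equivalence) is then immediate from the two inequalities.
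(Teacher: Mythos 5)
Your proposal is correct and takes essentially the same route as the paper: both fix a transversal disk tangent to $E^s\oplus E^u$ at $x_0$, use dynamical coherence to split the $\W^c$-holonomy of a loop in local product coordinates as $h_\gamma=(h^s_\gamma,h^u_\gamma)$, and then get the upper bound from the injection of $\Hol(\C)$ into $\Hol^s(\C)\times\Hol^u(\C)$ (the paper states this as surjectivity of the reverse map $\psi$) and the lower bound from restricting $h_\gamma$ to the two axes, i.e.\ the paper's $h_\gamma(x_s,x_0)=(h^s_\gamma(x_s),x_0)$ and $h_\gamma(x_0,x_u)=(x_0,h^u_\gamma(x_u))$. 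The splitting you flag as the delicate step is asserted in the paper at the same level of detail, so your write-up has no gap relative to the published argument.
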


\begin{proof} Let $\C$ be a center leaf and $x_0$ a point in $\C$.

Consider $D$ a small transversal disk to $\W^c$ and tangent to $E^s\oplus E^u$ in $x_0$. Let us denote $D^s$ and $D^u$ the connected components of $D \cap W^{cs}(x_0)$ and $D \cap W^{cu}(x_0)$ that contains $x_0$, respectively. More general, for every $x$ in $D$ denote $D^s(x)$ and $D^u(x)$ the connected components of $D \cap W^{cs}(x)$ and $D \cap W^{cu}(x)$ that contains $x$, respectively.

We have then, in a neighborhood $D'\subset D$ of $x_0$, local stable/unstable coordinates. That is, for every $x$ in $D'$ there exists unique $x_s\in D^s$ and $x_u\in D^u$ such that $x=D^u(x_s)\cap D^s(x_u)$. Denote the coordinates of $x$ by $(x_s,x_u)$.

Given a loop $\gamma$ based at $x_0$ we have well defined holonomy return maps, $h_\gamma^s:{D^s}'\to D^s$ and $h_\gamma^u:{D^u}'\to D^u$, for center leaves inside $W^{cs}(x_0)$ and $W^{cu}(x_0)$, respectively. Dynamical coherence says then that the holonomy return map associated to $\gamma$ in $D$ can be written in local cordinates as: $$h_\gamma=(h_\gamma^s,h_\gamma^u),$$ where $h_\gamma$ is given by $h_\gamma(x)=(h_\gamma^{s}(x_s),h_\gamma^{u}(x_u))$, with $x=(x_s,x_u)$ the stable/unstable coordinates defined before.

Elements of $\Hol^s(\C)$ and $\Hol^u(\C)$ are germs of local homeomorphism fixing $x_0$ in $D^s$ and $D^u$, respectively. The previous discussion shows that the map
$$\psi:\Hol^s(\C)\times \Hol^u(\C) \to \Hol(\C)$$

given by $\psi(h_\gamma^s,h_\gamma^u)=h_\gamma$ is surjective. This implies that $|\Hol(\C)| \leq |\Hol^s(\C)||\Hol^u(\C)|$.

Moreover, since $h_\gamma(x_s,x_0)=(h^s(x_s),x_0)$ and $h_\gamma(x_0,x_u)=(x_0,h^u(x_u))$, we obtain that $\max\{|\Hol^s(\C)|,|\Hol^u(\C)|\}\leq |\Hol(\C)| $.
\end{proof}

\subsubsection{Product neighborhoods} 

Assume throughout the rest of this section that $\dim(E^u)=1$ and $\Hol^u(\C)=\Id$ for every center leaf. This hypothesis will be fulfilled later during the proof of the main theorem.

The following will be of good use many times (for similar results see e.g. \cite{CC}) or \cite{HH}:

\begin{prop}[Product neighborhoods]\label{productnbhd}
Let $\C$ be a center leaf and consider $E=\bigcup_{x\in \C}\Ball^s(x)$, where $\Ball^s(x)\subset W^s(x)$ is a disk such that $\Ball^s(x)\cap \C = \{x\}$ for every $x\in \C$ and $\Ball^s(x)$ varies continuously with $x$. Then there exists an homeomorphism over its image $\varphi:E\times [-1,1]\to M$ such that:
\begin{enumerate}
\item $\varphi(E\times \{0\})=E$.
\item $\varphi(E \times \{y\})$ lies inside a $\W^{cs}$ leaf for every $y\in [-1,1]$.
\item $\varphi(\{x\}\times [-1,1])$ lies inside a $\W^u$ leaf for every $x\in E$.
\end{enumerate}
\end{prop}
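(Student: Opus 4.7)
The plan is to combine generalized Reeb stability (Theorem \ref{Reeb}) applied \emph{inside} $W^{cu}(\C)$ with the transverse local product structure of $\W^{cs}$ and $\W^u$ coming from dynamical coherence. First, I would apply Theorem \ref{Reeb} to $\C$ viewed as a compact leaf of the codimension-one foliation $\W^c|_{W^{cu}(\C)}$ of $W^{cu}(\C)$. Its transverse direction is $E^u$, of dimension one, and its relevant holonomy group is $\Hol^u(\C)$, which is trivial by standing hypothesis. Consequently, the Reeb saturated neighborhood of $\C$ in $W^{cu}(\C)$ is a true product: there exist $\epsilon_0>0$ and a homeomorphism $\Psi\colon \C\times(-\epsilon_0,\epsilon_0)\to U(\C)\subset W^{cu}(\C)$ such that $\C_t:=\Psi(\C\times\{t\})$ is a center leaf for every $t$ and $\Psi(\{x\}\times(-\epsilon_0,\epsilon_0))\subset W^u(x)$ for every $x\in\C$.

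Next, I would promote this to the desired product over all of $E$. Let $V_t:=W^{cs}(\C_t)$, so $V_0=W^{cs}(\C)\supset E$. For $y\in E$ and $t\in(-\epsilon_0,\epsilon_0)$, define $\varphi(y,t)$ to be the unique point of $W^u(y)\cap V_t$ near $y$, constructed by continuation in $t$ starting from $\varphi(y,0):=y$; for $y=x\in\C$ this recovers $\Psi(x,t)$. Existence, uniqueness and continuity of this intersection follow from the transverse local product structure of $\W^{cs}$ and $\W^u$ together with $\dim E^u=1$. The inverse map is constructed by recovering $t$ from the $\W^{cs}$-leaf of a point in the image and then sliding back along $W^u$ to $V_0$ and down to $E$. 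Properties (1)--(3) are then immediate from the construction: (1) and (3) hold by definition, and (2) holds because all $\varphi(y,t)$ with $t$ fixed lie in the single leaf $V_t$.

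Finally, a reparameterization of the unstable arcs rescales the interval $(-\epsilon_0,\epsilon_0)$ to $[-1,1]$. The main obstacle I foresee is the global well-definedness of the continuation defining $\varphi(y,t)$: this needs a uniform width of the Reeb product (from compactness of $\C$), a uniform size of transverse product charts along the image (from a finite cover of $\C$ by foliation boxes combined with continuity of $x\mapsto \Ball^s(x)$), and coherence around loops in $\C$, which is precisely what the vanishing of $\Hol^u(\C)$ supplies through Reeb stability.
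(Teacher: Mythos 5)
Your argument is essentially correct but takes a somewhat different route from the paper's, and the comparison is instructive. The paper never passes through $W^{cu}(\C)$: it fixes a basepoint $x_0\in\C$, chooses $\epsilon>0$ so that the $\epsilon$-unstable arcs through distinct points of $E$ are pairwise disjoint, then uses continuity of $\W^{cs}$ and boundedness of $\diam(E)$ to find a single scale $\delta>0$ at which every path $\gamma$ in $E$ issuing from $x_0$ with $\length(\gamma)\leq 2\diam(E)$ has a well-defined transverse holonomy $h_\gamma$ on $(x_0-\delta,x_0+\delta)^u$, and at which $h_\gamma=\mathrm{id}$ whenever such a $\gamma$ is a loop (these loops retract along the stable disks into $\C$, and $\Hol^u(\C)=\Id$); it then simply sets $\varphi(x,y)=h_{\gamma_x}(y)$ for any path $\gamma_x$ from $x_0$ to $x$ of length at most $\diam(E)$, path-independence being exactly the loop statement. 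Your version front-loads a Reeb/trivial-holonomy product $\C\times(-\epsilon_0,\epsilon_0)$ inside $W^{cu}(\C)$ and then continues the intersection of $W^u(y)$ with the leaves $V_t=W^{cs}(\C_t)$; this can be made to work, but all the substance sits precisely in the step you flag as the ``main obstacle'': making the continuation exist uniformly over $y\in E$, continuous in $(y,t)$ and injective again reduces to holonomy transport along paths in $E$ of length at most $\diam(E)$, together with upgrading the triviality of $\Hol^u(\C)$ from a germ statement to a statement at a definite scale --- that is, to the paper's argument --- so your route is not shorter, it just hides this inside ``continuation''. Two points to fix if you write it up: for fixed $y$ the parameter interval is simply connected, so ``coherence around loops in $\C$'' does not enter the continuation itself; it enters when you compare continuations at different points of $E$ (equivalently, when you realize the continuation as holonomy along a path from $\C$ to $y$), and it should be stated that way. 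Also, distinct values of $t$ may give the same center-stable leaf $V_t=V_{t'}$ (a cs-leaf can meet the product neighbourhood of $\C$ in $W^{cu}(\C)$ in several center leaves), so the inverse of $\varphi$ cannot be defined by ``recovering $t$ from the $\W^{cs}$-leaf of a point''; injectivity and the inverse must instead come from the disjointness of the short unstable fibers through distinct points of $E$ (the paper's choice of $\epsilon$), sliding back only along that short fiber.
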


In this case, $\varphi(E\times [-1,1])$ will be called as a \emph{$(\W^{cs},\W^u)$-product neighbourhood} of $E$ and, by an abuse of notation, we will simply refer to it as $E\times [-1,1]$.

\begin{proof}
We can consider $\epsilon>0$ such that for every distinct $x,x'\in E$ we get $(x-\epsilon,x+\epsilon)^u\cap (x'-\epsilon,x'+\epsilon)^u=\emptyset$.

Let us fix $x_0\in \C$. By the continuity of $\W^{cs}$ we can consider $\delta>0$ such that for every curve $\gamma:[0,1]\to E$ with $\gamma(0)=x_0$ and $\length(\gamma)\leq 2\diam(E)$ we have a well defined holonomy map $$h_\gamma:(x_0-\delta,x_0+\delta)^u\to (\gamma(1)-\epsilon,\gamma(1)+\epsilon)^u.$$ 

Observe that if such a $\gamma$ is closed, since $E=\bigcup_{x\in \C}\Ball^s(x)$ with each $\Ball^s(x)$ a stable disk, then $\gamma$ would be homotopic to a loop in $\C$. Since $\Hol^u(\C)=\Id$ we can take $\delta>0$ small enough as to assure that $h_\gamma$ coincides with the identity from $(x_0-\delta,x_0+\delta)^u$ to itself for every $\gamma$ such that $\length(\gamma)\leq 2\diam(E)$.

For every $x\in E$ consider $\gamma_x:[0,1]\to E$ a curve from $x_0$ to $x$ such that $\length(\gamma_x)\leq \diam(E)$. Now, for every $y\in (x_0-\delta,x_0+\delta)^u$ define $$\varphi(x,y)=h_{\gamma_x}(y).$$

This definition is independent of the choice of the curve $\gamma_x$. Indeed, if $\gamma_x':[0,1]\to E$ is another such a curve, the concatenation $\gamma_x*\gamma_x'$ has length at most $2\diam(E)$ and then $id=h_{\gamma_x*\gamma_x'}=h_{\gamma_x}\circ h_{\gamma_x'}^{-1}$ implies $h_{\gamma_x}(y)=h_{\gamma_x'}(y)$ for every $y\in (x_0-\delta,x_0+\delta)^u$.

The properties of the map $\varphi$ follow directly from its definition

\end{proof}

We will many times need a particular instance of the previous proposition: 

\begin{obs}\label{Reebproduct}
For every center leaf $\C$ such that $|\Hol(\C)|<\infty$ we can consider $\U^s(\C)$ a neighbourhood of $\C$ in $W^{cs}(\C)$ given by the Generalized Reeb stability theorem (see Theorem \ref{Reeb}) such that:
\begin{itemize}
\item The associated projection $\pi:\U^s(\C)\to \C$ satisfies that $\pi^{-1}(x)$ is a disk in $W^s(x)$ for every $x\in \C$.

\item There exists a $(\W^{cs},\W^u)$-product neighbourhood $\U^s(\C)\times [-1,1]$ of $\U^s(\C)$.
\end{itemize}
\end{obs}
\begin{proof}
By the transversality of the foliations $\W^s$ and $\W^c$ inside $W^{cs}(\C)$ and the fact that $\U^s(\C)$ can be taken arbitrarily small we obtain that the projection $\pi$ can be taken along leaves of $\W^s$.

The neighbourhood $\U^s(\C)$ fulfills the hypothesis of Proposition  \ref{productnbhd} and that implies the existence of the product neighbourhood.
\end{proof}

\section{The bad set is saturated by the center-unstable foliation}\label{Section3}

From now on, let $f:M\to M$ be a dynamically coherent partially hyperbolic diffeomorphism such that $\dim(E^u)=1$.

As before, we assume that $\W^c$ is a compact foliation. We are going to see that $\W^c$ is in fact uniformly compact (meaning that the leaf volume function is bounded in $M$).

\begin{obs} We can assume from now on that all bundles $E^s,E^{cs},E^c,E^{cu}$ and $E^u$ are orientable. 
\end{obs}
\begin{proof}
By taking a finite cover of $M$ we can lift all bundles $E^s,E^{cs},E^c,E^{cu}$ and $E^u$ to orientable bundles. Then $f$ lifts to a dynamically coherent partially hyperbolic diffeomorphism $\tilde{f}$ whose center foliation $\tilde{\W}^c$ is the lift of $\W^c$. The lifted center foliation $\tilde{\W}^c$ remains compact, and each one of its leaves is a finite cover of some leaf of $\W^c$. Then, if $\tilde{\W}^c$ is uniformly compact, so is $\W^c$.
\end{proof}

Let us see first a simple but yet crucial consequence of the codimension one hypothesis:

\begin{lema}\label{lemma1} 
The group $\Hol^u(\C)$ is trivial for every center leaf $\C$.
\end{lema}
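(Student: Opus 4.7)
\demo{}
My plan is to prove the lemma by contradiction, exploiting the orientability of $E^u$, the partial hyperbolic expansion of $f$ on $E^u$, and the local product structure of the transverse foliations $\W^c$ and $\W^u$ inside $W^{cu}(\C)$.

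First, since $\dim(E^u)=1$ and $E^u$ is orientable, for any $x_0\in\C$ the unstable transversal $W^u(x_0)$ is an oriented topological line. Every holonomy germ induced by a loop in $\C$ on this transversal must be orientation-preserving. An elementary fact about orientation-preserving germs of homeomorphisms of $(\RR,0)$ is that any finite-order element is the identity germ (by monotonicity: if $h(y)>y$ for some $y>0$, then $h^n(y)>y$ for all $n\geq 1$). So any non-trivial $h\in\Hol^u(\C)$ must have infinite order, and in particular there exist points $y\in W^u(x_0)$ arbitrarily close to $x_0$ with $h(y)\neq y$.

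Suppose then, for contradiction, that some loop $\gamma\subset\C$ based at $x_0$ produces a non-trivial holonomy germ $h=h_\gamma^u$. Pick $y$ very close to $x_0$ with $h(y)\neq y$. The key observation is that both $y$ and $h(y)$ lie on the compact center leaf $\C_y$ (they are the endpoints of the lift of $\gamma$ in $\C_y$) and on the unstable leaf $W^u(x_0)$; thus $\{y,h(y)\}\subset \C_y\cap W^u(x_0)$ are two distinct points. Now I apply $f^{-n}$ and use partial hyperbolicity: the points $f^{-n}(y),f^{-n}(h(y))$ belong to $\C_{f^{-n}(y)}\cap W^u(f^{-n}(x_0))$, and their unstable distance satisfies
\[
d^u\bigl(f^{-n}(y),f^{-n}(h(y))\bigr)\leq \lambda^{-n}\,d^u(y,h(y))\xrightarrow[n\to\infty]{}0,
\]
where $\lambda>1$ is a uniform expansion rate of $f$ on $E^u$.

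To derive the contradiction I want to invoke a uniform local product structure for the transverse foliations $\W^c|_{W^{cu}}$ and $\W^u|_{W^{cu}}$ inside each $W^{cu}$-leaf: by compactness of $M$ there is a uniform $\eta>0$ such that a local center plaque and a local unstable arc of length $\eta$ through a common point meet only at that point. For $n$ large enough the unstable distance above is less than $\eta$. The remaining step is to argue that $f^{-n}(y)$ and $f^{-n}(h(y))$ must in fact lie on the \emph{same} local center plaque (not merely on the same global leaf): this should follow from the continuous tracking that defines the holonomy of $f^{-n}(\gamma)$, since by $f$-equivariance the germ at $f^{-n}(x_0)$ is the conjugate of $h$ by $f^{-n}|_{W^u(x_0)}$, and for $n$ large the displacement occurs inside the local product box where the plaque structure is that of a product. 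Once they lie on the same local plaque and the same short unstable arc, the local product forces $f^{-n}(y)=f^{-n}(h(y))$, hence $y=h(y)$, contradicting the choice of $y$.

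The main obstacle I foresee is precisely the last step: justifying that the two points lie on the same local center plaque, given that a priori a global center leaf may reenter a small chart on several distinct plaques. If this direct argument does not close, an alternative is to choose the subsequence $f^{-n_k}(x_0)\to z^*$ landing outside the bad set $\B$ (using that $\B$ has empty interior by Proposition \ref{PropertiesB}, so that nearby leaves with finite, hence trivial by orientability, $\Hol^u$ are available); Reeb stability applied to the codimension-one foliation $\W^c|_{W^{cu}(\C_{z^*})}$ then forces nearby leaves to meet short unstable arcs in a single point, giving the contradiction. Handling the case $\C\subset\B$, where the whole $f^{-1}$-orbit of $x_0$ stays in $\B$, is where the argument is most delicate and seems to require combining the dynamical shrinking with the lower semicontinuity of the volume function.
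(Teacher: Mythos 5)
Your argument has a genuine gap, and you have located it yourself: the step where $f^{-n}(y)$ and $f^{-n}(h_\gamma(y))$ are claimed to lie in the \emph{same local center plaque} is exactly the point that cannot be taken for granted. A uniform ``plaque separation'' statement of the kind you invoke (a center plaque and a short unstable arc through a common point meet only at that point, uniformly over $M$) is essentially equivalent to uniform compactness of $\W^c$, which is what the whole paper is trying to prove; near the bad set $\B$ a single compact center leaf can return to an arbitrarily small chart along many distinct plaques, so the unstable contraction $d^u(f^{-n}(y),f^{-n}(h_\gamma(y)))\to 0$ by itself yields no contradiction. Your fallback is also not closed: you cannot arrange the backward limit points of $x_0$ to avoid $\B$ (having empty interior does not control $\omega$-limits of orbits, and $\B$ is $f$-invariant), and you concede that the case $\C\subset\B$ remains open. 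As written, the proof does not go through.

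The fix is much simpler than the dynamical route, and it uses an ingredient you already wrote down and then discarded: the monotonicity of the orientation-preserving germ. If $h_\gamma$ is not the identity, pick $y$ with $h_\gamma(y)\neq y$; by monotonicity one of the two orbits $\{h_\gamma^{n}(y)\}_{n\ge 0}$ or $\{h_\gamma^{-n}(y)\}_{n\ge 0}$ is an infinite, monotone sequence of \emph{distinct} points that stays in the fixed transversal $\lambda\subset W^u(x)$ (it is trapped between $y$ and the fixed point $x$). All of these points lie on the single center leaf $\C_y$, because a holonomy return map sends a point to another point of its own leaf. Now fix one foliated chart $U$ of $\W^c$ containing $\lambda$: distinct points of $\lambda$ lie in distinct plaques of $U$, so the compact leaf $\C_y$ would contain infinitely many plaques of $U$, which is impossible (each plaque has volume bounded below in the chart, or equivalently a compact leaf meets a foliation box in finitely many plaques). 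This is the paper's argument; it is static, needs no iteration of $f$, no local product structure along $E^u$, and no discussion of $\B$. In short: exploit the whole infinite holonomy orbit on one compact leaf inside one chart, rather than just the pair $y,h_\gamma(y)$ pushed by the dynamics.
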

\begin{proof} 

Let $\C$ be a center leaf. Recall that $\Hol^u(\C)$ consists of the holonomy maps associated to $\C$ inside the center-unstable leaf $W^{cu}(\C)$.

Let $x$ be a point in $\C$ and $\gamma:[0,1]\to \C$ a closed curve based in $x$. Consider $\lambda' \subset\lambda\subset \W^u(x)$ small enough one-dimensional transversals through $x$ such that the holonomy return map associated to $\gamma$ is a well defined map $h_\gamma:\lambda' \to \lambda$. As we are assuming that the unstable foliation is orientable, the map $h_\gamma$ preserves the orientation of $\lambda$. 

Assume that $h_\gamma$ is not the identity. Then there exists $y\in\lambda'$ such that $\{h_\gamma^n(y)\}_{n\geq 0}$ or $\{h_\gamma^{-n}(y)\}_{n\geq 0}$ constitutes an infinite set of points lying in $\lambda$. Assume without loss of generality it is the former.

Fix $U$ a small foliated neighborhood of $\W^c$ containing $\lambda$. Through every point of $\{h_\gamma^n(y)\}_{n\geq 0}$ corresponds a different plaque of $U$. Since each one of this plaques belong to $\C_y$ this contradicts the fact that $\C_y$ is compact. 
\end{proof}

From the previous lemma, we deduce that each center leaf $\C$ has in $W^{cu}(\C)$ a product neighbourhood of the form $\C\times (-\delta,\delta)^u$ , where each $\C \times \{y\}$ corresponds to a center leaf and each $\{x\} \times (-\delta, \delta)^u$ corresponds to an unstable arc (see Figure \ref{fig1}). This will allow us to use Remark \ref{Reebproduct}.

This kind of ``stacking'' of center leaves along the unstable direction implies that stable holonomy is constant along unstable leaves (see Figure \ref{fig2}):

\begin{lema}\label{lemma2} For every $x\in M$ and $y\in W^u(x)$ the groups $\Hol^s(\C_x)$ and $\Hol^s(\C_y)$ are isomorphic.
\end{lema}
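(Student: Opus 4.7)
The plan is to reformulate the statement via Remark~\ref{Hol_{cs}=Hol^u} as $\Hol_{cu}(\C_x)\simeq\Hol_{cu}(\C_y)$, and then to realize this isomorphism as conjugation by the unstable arc joining $x$ and $y$. Since $y\in W^u(x)$, the center-unstable leaves coincide: $W^{cu}(\C_x)=W^{cu}(\C_y)$; call this common leaf $W$. Then $\Hol_{cu}(\C_x)$ and $\Hol_{cu}(\C_y)$ both sit as subgroups of $\Hol_{cu}(W)$, based at $x$ and at $y$ respectively, and what needs to be shown is that after changing basepoint from $x$ to $y$ along the unstable arc these subgroups agree.

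I would first treat the local case in which $y$ lies close to $x$ along $W^u(x)$. By Lemma~\ref{lemma1}, $\Hol^u(\C_x)=\Id$, so the generalized Reeb stability theorem (Theorem~\ref{Reeb}) applied to $\C_x$ as a leaf of $\W^c|_W$ provides a saturated neighbourhood on which each leaf is a one-sheeted cover (hence homeomorphic image) of $\C_x$; adapting the proof of Proposition~\ref{productnbhd} inside $W$ with $E=\C_x$ yields further a $(\W^c,\W^u)$-product neighbourhood $\C_x\times(-\delta,\delta)^u$ in which each slice $\C_x\times\{t\}$ is a full center leaf. For $y$ in this neighbourhood, letting $\sigma$ be the unstable arc from $x$ to $y$, the product structure produces a homeomorphism $\Phi\colon\C_x\to\C_y$ obtained by sliding along unstable leaves.

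Given a loop $\alpha$ in $\C_x$ based at $x$, the loop $\Phi\circ\alpha$ in $\C_y$ based at $y$ is then based-homotopic in the product neighbourhood (and hence in $W$) to $\sigma^{-1}\ast\alpha\ast\sigma$: indeed both loops project to $\alpha$ under the first coordinate of the product and that projection is a homotopy equivalence. Since the $\W^{cu}$-holonomy germ on a stable transversal depends only on the based homotopy class in $W$, the holonomy return map of $\Phi\circ\alpha$ equals the conjugate of the holonomy return map of $\alpha$ by the $\W^{cu}$-holonomy of $\sigma$ between stable transversals at $x$ and at $y$. This yields the desired local isomorphism $\Hol_{cu}(\C_x)\to\Hol_{cu}(\C_y)$, whose inverse is obtained by swapping the roles of $x$ and $y$.

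To pass to the general case, I would cover the compact unstable arc from $x$ to $y$ by finitely many neighbourhoods of the form above and compose the resulting local isomorphisms; invoking Remark~\ref{Hol_{cs}=Hol^u} once more yields $\Hol^s(\C_x)\simeq\Hol^s(\C_y)$. The main obstacle I expect is the careful verification in the local step that $\Phi\circ\alpha$ and $\sigma^{-1}\ast\alpha\ast\sigma$ are based-homotopic in $W$ and that this homotopy translates into the claimed conjugacy of $\W^{cu}$-holonomy germs on stable transversals; this is essentially the rigorous content of the ``stacking'' picture described just before the statement, and once it is in place the rest of the argument is a straightforward chaining along $W^u(x)$.
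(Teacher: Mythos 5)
Your argument is correct, but it reaches the key local isomorphism by a different mechanism than the paper. Both proofs start from Lemma \ref{lemma1} (trivial unstable holonomy gives the product structure along the unstable direction) and both finish with a local-to-global step along $W^u(x)$ (you chain finitely many neighbourhoods over the compact arc $[x,y]^u$; the paper runs an open-and-closed argument, which amounts to the same thing). The difference is how the holonomy groups are compared. The paper stays on the stable side: it applies Proposition \ref{productnbhd} to the stable-saturated set $E=\Ball^s_\delta(\C_x)$, so that the projection $p_t$ along unstables is a homeomorphism between the center-stable plaques $E\times\{0\}$ and $E\times\{t\}$ carrying $\W^c|_{E\times\{0\}}$ to $\W^c|_{E\times\{t\}}$ and $\C_x$ to $\C_y$; the isomorphism $\Hol^s(\C_x)\simeq\Hol^s(\C_y)$ is then read off by conjugating with this foliated homeomorphism. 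You instead never thicken in the stable direction: you work inside the common center-unstable leaf $W$, convert $\Hol^s$ into $\Hol_{cu}$ via the identification in Section \ref{holonomies}, and realize the isomorphism as conjugation by the path holonomy of the unstable arc $\sigma$, using the homeomorphism $\Phi\colon\C_x\to\C_y$ coming from the product neighbourhood $\C_x\times(-\delta,\delta)^u$, the based homotopy $\Phi\circ\alpha\simeq\sigma^{-1}\ast\alpha\ast\sigma$ in $W$, and homotopy invariance of holonomy germs. Your route is the more algebraic one (the conjugacy is just change of basepoint for leaf holonomy), at the cost of the two verifications you correctly flag, both of which are harmless: that each slice $\C_x\times\{t\}$ is a full center leaf is exactly the remark the paper makes right after Lemma \ref{lemma1} (Theorem \ref{Reeb} applied to $\W^c|_W$ with trivial holonomy), and the based homotopy follows since the second factor of the product is contractible, so the first-coordinate projection induces an isomorphism on fundamental groups and sends $\sigma$ to a constant path. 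The paper's version buys an explicit foliation-conjugating homeomorphism between stable-saturated plaques (the ``stacking'' picture of Figure \ref{fig2}); yours isolates its purely homotopy-theoretic content.
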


\begin{proof}
Obseve that it is enough to give a local argument: suppose that for every $x\in M$ there is an unstable arc $(x-\delta, x+\delta)^u$ such that $\Hol(\C_y)\simeq \Hol(\C_{x})$ for every $y\in (x-\delta, x+\delta)^u$. This implies that the set $\{y\in W^u(x)$ :  $\Hol(\C_y)$ is isomorphic to $\Hol(\C_x)\}$ is an open subset of $W^u(x)$ as well as its complement, and thus the thesis follows. 

Let $x$ be a point in $M$ and denote by $\C$ the center leaf through $x$. Since $\C$ is compact there exists $\delta>0$ such that $\Ball^s_\delta(z)\cap \Ball^s_\delta(z')=\emptyset$ for every distinct $z,z'\in \C$. Thus $\bigcup_{z\in \C}\Ball^s_\delta(z)$ (denote it $\Ball^s_\delta(\C)$) is in the hypothesis of the Proposition \ref{productnbhd} and we can consider a $(\W^{cs},\W^u)$-product neighbourhood $\Ball^s_\delta(\C)\times (-1,1)$. 

Given $t\in (-1,1)$ we can define a projection $p_{t}:\Ball^s_\delta(\C)\times \{0\}\to \Ball^s_\delta(\C)\times \{t\}$ along unstable leaves inside $\Ball^s_\delta(\C)\times (-1,1)$. Namely $$p_{t}(z,0)=(z,t).$$

The projection $p_{t}$ then indentifies $\W^c|_{\Ball^s_\delta(\C)\times \{0\}}$ homeomorphically with $\W^c|_{\Ball^s_\delta(\C)\times \{t\}}$ since the leaves of $\W^c$ are the connected components of the intersection of leaves of $\W^{cs}$ with leaves of $\W^{cu}$. This implies that 
\begin{center} $\Hol^s(\C_{y})\simeq\Hol^s(\C_{x})$ \end{center} 
for every $y\in \{ x\}\times (-1,1)$.



\end{proof}

\begin{figure}[!tbp]
\centering
\begin{minipage}[b]{.5\textwidth}
  \centering
\begingroup%
  \makeatletter%
  \providecommand\color[2][]{%
    \errmessage{(Inkscape) Color is used for the text in Inkscape, but the package 'color.sty' is not loaded}%
    \renewcommand\color[2][]{}%
  }%
  \providecommand\transparent[1]{%
    \errmessage{(Inkscape) Transparency is used (non-zero) for the text in Inkscape, but the package 'transparent.sty' is not loaded}%
    \renewcommand\transparent[1]{}%
  }%
  \providecommand\rotatebox[2]{#2}%
  \newcommand*\fsize{\dimexpr\f@size pt\relax}%
  \newcommand*\lineheight[1]{\fontsize{\fsize}{#1\fsize}\selectfont}%
  \ifx\svgwidth\undefined%
    \setlength{\unitlength}{97.12747209bp}%
    \ifx\svgscale\undefined%
      \relax%
    \else%
      \setlength{\unitlength}{\unitlength * \real{\svgscale}}%
    \fi%
  \else%
    \setlength{\unitlength}{\svgwidth}%
  \fi%
  \global\let\svgwidth\undefined%
  \global\let\svgscale\undefined%
  \makeatother%
  \begin{picture}(1,1.22289996)%
    \lineheight{1}%
    \setlength\tabcolsep{0pt}%
    \put(0,0){\includegraphics[width=\unitlength,page=1]{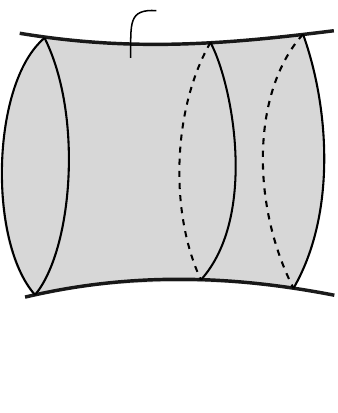}}%
    \put(0.47241561,1.15732117){\color[rgb]{0,0,0}\makebox(0,0)[lt]{\lineheight{1.25}\smash{\begin{tabular}[t]{l}$\C$\end{tabular}}}}%
    \put(0,0){\includegraphics[width=\unitlength,page=2]{Figure1.pdf}}%
    \put(0.82761059,1.16284565){\color[rgb]{0,0,0}\makebox(0,0)[lt]{\lineheight{1.25}\smash{\begin{tabular}[t]{l}$\ii$\end{tabular}}}}%
    \put(0,0){\includegraphics[width=\unitlength,page=3]{Figure1.pdf}}%
  \end{picture}%
\endgroup%

  \caption{}\label{fig1}
  \label{fig:test1}
\end{minipage}%
\hfill
\begin{minipage}[b]{.5\textwidth}
  \centering
\begingroup%
  \makeatletter%
  \providecommand\color[2][]{%
    \errmessage{(Inkscape) Color is used for the text in Inkscape, but the package 'color.sty' is not loaded}%
    \renewcommand\color[2][]{}%
  }%
  \providecommand\transparent[1]{%
    \errmessage{(Inkscape) Transparency is used (non-zero) for the text in Inkscape, but the package 'transparent.sty' is not loaded}%
    \renewcommand\transparent[1]{}%
  }%
  \providecommand\rotatebox[2]{#2}%
  \newcommand*\fsize{\dimexpr\f@size pt\relax}%
  \newcommand*\lineheight[1]{\fontsize{\fsize}{#1\fsize}\selectfont}%
  \ifx\svgwidth\undefined%
    \setlength{\unitlength}{130.85201636bp}%
    \ifx\svgscale\undefined%
      \relax%
    \else%
      \setlength{\unitlength}{\unitlength * \real{\svgscale}}%
    \fi%
  \else%
    \setlength{\unitlength}{\svgwidth}%
  \fi%
  \global\let\svgwidth\undefined%
  \global\let\svgscale\undefined%
  \makeatother%
  \begin{picture}(1,1.32817258)%
    \lineheight{1}%
    \setlength\tabcolsep{0pt}%
    \put(9.9126172,-0.3239316){\color[rgb]{0,0,0}\makebox(0,0)[lt]{\begin{minipage}{6.12794684\unitlength}\raggedright \end{minipage}}}%
    \put(-2.42847343,3.84516575){\color[rgb]{0,0,0}\makebox(0,0)[lt]{\begin{minipage}{21.03283703\unitlength}\raggedright \end{minipage}}}%
    \put(0.6970723,0.01443673){\color[rgb]{0,0,0}\makebox(0,0)[lt]{\lineheight{1.25}\smash{\begin{tabular}[t]{l}$\C_x$\end{tabular}}}}%
    \put(0.13135317,1.09765927){\color[rgb]{0,0,0}\makebox(0,0)[lt]{\lineheight{1.25}\smash{\begin{tabular}[t]{l}$\C_y$\end{tabular}}}}%
    \put(-0.00508125,0.15255844){\color[rgb]{0,0,0}\makebox(0,0)[lt]{\lineheight{1.25}\smash{\begin{tabular}[t]{l}$W^{cs}(\C_x)$\end{tabular}}}}%
    \put(0.67950996,1.03199722){\color[rgb]{0,0,0}\makebox(0,0)[lt]{\lineheight{1.25}\smash{\begin{tabular}[t]{l}$\ii$\end{tabular}}}}%
    \put(0,0){\includegraphics[width=\unitlength,page=1]{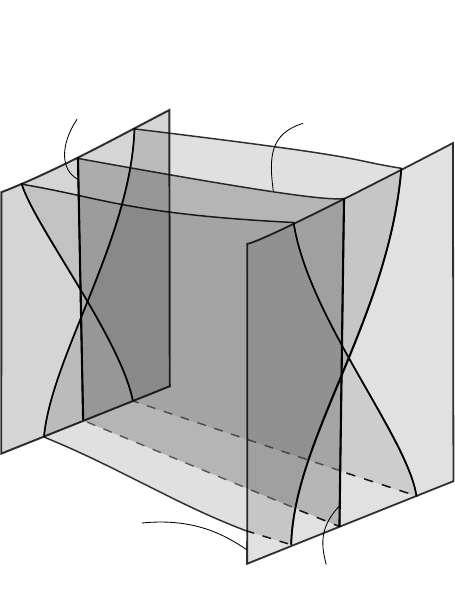}}%
    \put(0.38032448,1.13361453){\color[rgb]{0,0,0}\makebox(0,0)[lt]{\lineheight{1.25}\smash{\begin{tabular}[t]{l}$W^{cs}(\C_y)$\end{tabular}}}}%
    \put(0,0){\includegraphics[width=\unitlength,page=2]{Figure2.pdf}}%
  \end{picture}%
\endgroup%

  \caption{}\label{fig2}
  \label{fig:test2}
\end{minipage}
\end{figure}
Recall from the preliminaries that we denote the bad set $\B$ of $\W^c$ as the points of $M$ in which the leaf volume function is not locally bounded.

\begin{cor}\label{saturated} The bad set $\B$ of $\W^c$ is saturated by the center-unstable foliation. 
\end{cor}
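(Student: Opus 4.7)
\bigskip
\noindent\textbf{Proof plan for Corollary \ref{saturated}.}

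The plan is to reduce saturation by $\W^{cu}$ to saturation by $\W^{c}$ (already known from Proposition \ref{PropertiesB}) together with saturation by $\W^{u}$, and to obtain the latter using the holonomy machinery assembled in Lemmas \ref{lemma1}, \ref{lemma2} and Propositions \ref{Holprod}, \ref{Holinfty}.

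First I would show that $\B$ is saturated by $\W^{u}$. Let $x\in \B$ and $y\in W^{u}(x)$. By Proposition \ref{Holinfty} the membership $\C_x\subset \B$ is equivalent to $|\Hol(\C_x)|=\infty$. Proposition \ref{Holprod} then tells us that at least one of $\Hol^{s}(\C_x)$ or $\Hol^{u}(\C_x)$ is infinite; but Lemma \ref{lemma1} forces $\Hol^{u}(\C_x)$ to be trivial, so necessarily $|\Hol^{s}(\C_x)|=\infty$. By Lemma \ref{lemma2} the stable holonomy groups are isomorphic along unstable leaves, hence $|\Hol^{s}(\C_y)|=\infty$ as well. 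Applying Proposition \ref{Holprod} in the reverse direction (combined again with $\Hol^{u}(\C_y)=\Id$) gives $|\Hol(\C_y)|=\infty$, and Proposition \ref{Holinfty} then yields $\C_y\subset\B$, in particular $y\in\B$.

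Next I would upgrade $\W^{c}$- and $\W^{u}$-saturation to $\W^{cu}$-saturation. Fix $x\in\B$ and $y\in W^{cu}(x)$. The center-unstable leaf through $x$ is locally a product of pieces of $\W^{c}$ and $\W^{u}$ (this is where dynamical coherence and the local product structure inside $W^{cu}$ are used), so one can connect $x$ to $y$ inside $W^{cu}(x)$ by a path that is piecewise tangent to $E^{c}$ or to $E^{u}$. Along each $\W^{c}$-piece we stay in $\B$ because $\B$ is $\W^{c}$-saturated, and along each $\W^{u}$-piece we stay in $\B$ by the previous paragraph. Hence $y\in\B$, which is exactly the assertion that $\B$ is saturated by $\W^{cu}$.

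The argument is essentially a bookkeeping exercise once Lemmas \ref{lemma1} and \ref{lemma2} are in place, so I do not expect a genuine obstacle. The only point that requires mild care is the piecewise-path argument in the last step, which implicitly relies on the fact that the relation ``$y$ is reachable from $x$ by a finite concatenation of $\W^{c}$- and $\W^{u}$-arcs'' coincides with lying in the same $\W^{cu}$-leaf; this is standard from the characterization of $\W^{c}$ as the intersection foliation of $\W^{cs}$ and $\W^{cu}$ and from the local product structure of $\W^{u}$ and $\W^{c}$ inside $\W^{cu}$.
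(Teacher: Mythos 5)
Your proposal is correct and follows essentially the same route as the paper: membership in $\B$ is translated into $|\Hol(\C)|=\infty$ via Proposition \ref{Holinfty}, reduced to $|\Hol^s(\C)|=\infty$ via Proposition \ref{Holprod} and Lemma \ref{lemma1}, transported along unstable leaves by Lemma \ref{lemma2}, and combined with $\W^c$-saturation from Proposition \ref{PropertiesB}. Your only addition is to spell out, via the local product structure of $\W^c$ and $\W^u$ inside a $\W^{cu}$-leaf, why $\W^c$- and $\W^u$-saturation together give $\W^{cu}$-saturation, a step the paper treats as immediate.
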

\begin{proof}
From Proposition \ref{Holinfty} we have that a center leaf belongs to $\B$ if and only if $|\Hol(\C)|=\infty$. By Proposition \ref{Holprod} and Lemma \ref{lemma1}, we have that $|\Hol(\C)|< \infty$ if and only if $|\Hol^s(\C)|< \infty$.

The previous two lemmata then implies that $\B$ is saturated by the unstable foliation. As it is also saturated by the center foliation (see Remark \ref{PropertiesB}) the thesis follows.
\end{proof}

We obtain the following.

\begin{cor} The bad set $\B$ of $\W^c$ is a proper attractor. In particular, there are no transitive, codimension one, dynamically coherent partially hyperbolic diffeomorphisms with compact center foliation and unbounded volume of leaves.
\end{cor}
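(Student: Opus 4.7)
The plan is to exhibit an open neighborhood of $\B$ which is strictly forward-invariant under a suitable iterate of $f$, and whose maximal invariant set is $\B$ itself. The available ingredients are strong: $\B$ is non-empty, closed, with empty interior, $f$-invariant (Proposition \ref{PropertiesB} and the following remark), and, by Corollary \ref{saturated}, saturated by both $\W^c$ and $\W^{cu}$. Since $E^s$ is uniformly contracted by $Df$, the natural candidate is the stable-thickening of $\B$.

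Concretely, for $\eps > 0$ small I would set
$$U_\eps := \bigcup_{z \in \B} \Ball^s_\eps(z).$$
The local product structure coming from the transversality of $\W^s$ and $\W^{cu}$, combined with the $\W^{cu}$-saturation of $\B$, shows that $U_\eps$ is an open neighborhood of $\B$ in $M$: if $y \in U_\eps$ with $y \in \Ball^s_\eps(z)$ and $z \in \B$, then any $y'$ close to $y$ meets $W^{cu}_{\mathrm{loc}}(z)$ along a local stable plaque at a point $z' \in W^{cu}(z) \subset \B$ with $d^s(y', z') < \eps$, placing $y'$ in $U_\eps$. Since $\B$ has empty interior and $M$ is compact, $\eps$ can be chosen so that $U_\eps \neq M$.

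For the trapping property, let $\ell \geq 1$ and $\lambda \in (0,1)$ come from the definition of partial hyperbolicity, so that $f^\ell$ contracts $E^s$-vectors by at least $\lambda$. For $\eps$ small one obtains $f^\ell(\Ball^s_\eps(z)) \subset \Ball^s_{\lambda\eps}(f^\ell(z))$ uniformly in $z$. Using $f^\ell(\B) = \B$ this yields $f^\ell(\overline{U_\eps}) \subset U_{\lambda\eps} \subset U_\eps$, so $U_\eps$ is a proper trapping region for $f^\ell$. Any $x \in \bigcap_{n \geq 0} f^{n\ell}(\overline{U_\eps})$ can be written as $x = f^{n\ell}(y_n)$ with $y_n \in \overline{U_\eps}$, hence $d(x, f^{n\ell}(z_n)) \leq \lambda^n \eps$ for some $z_n \in \B$; since $f^{n\ell}(z_n) \in \B$ and $\B$ is closed, $x \in \B$. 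Thus $\B$ equals the maximal invariant set in $U_\eps$, and $\B$ is a proper attractor.

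The transitivity statement is then immediate: any transitive $f$ admits a point whose forward orbit is dense in $M$; such an orbit must eventually enter $U_\eps$, after which it stays trapped inside $\overline{U_\eps} \neq M$, contradicting density. The main technical point is the openness of $U_\eps$, for which one really needs the $\W^{cu}$-saturation of $\B$ furnished by Corollary \ref{saturated}; everything else is a straightforward consequence of the uniform contraction of $E^s$ together with $\B$ being closed and $f$-invariant.
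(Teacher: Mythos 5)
Your construction is essentially a fleshed-out version of the paper's own (one-line) argument: the paper deduces from Corollary \ref{saturated} that $\B$ is saturated by $\W^{cu}$, hence ``transversally stable'', compact, with empty interior, and therefore a proper attractor; your stable thickening $U_\eps=\bigcup_{z\in\B}\Ball^s_\eps(z)$ is exactly the trapping region that makes this precise, and the openness argument via local product structure and $\W^{cu}$-saturation is the same mechanism the paper later uses to see that $\bigcup_{x\in\B}W^s(x)$ is open. The contraction estimate, the properness of $U_\eps$ (from $\B$ closed with empty interior), and the identification of $\B$ as the maximal invariant set are all correct, modulo the routine remark that $\overline{U_\eps}\subset\bigcup_{z\in\B}\overline{\Ball^s_\eps(z)}$, which follows from compactness of $\B$ and continuity of stable disks.

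The one step that does not work as written is the last one: you establish trapping only for $f^\ell$ ($f^\ell(\overline{U_\eps})\subset U_{\lambda\eps}$), but then assert that a dense $f$-orbit, once inside $U_\eps$, ``stays trapped'' there. The $f$-orbit may well leave $U_\eps$ at times not divisible by $\ell$, so no contradiction with density follows directly; likewise, strictly speaking you have produced an attractor for $f^\ell$ rather than for $f$. This is easily repaired: either pass to an adapted metric so that one may take $\ell=1$, or replace $U_\eps$ by $\widehat U=\bigcup_{j=0}^{\ell-1}f^j(U_\eps)$, which is open, satisfies $f(\widehat U)\subset\widehat U$ because $f^\ell(U_\eps)\subset U_\eps$, and is contained in $U_{C\eps}$ for a uniform constant $C$ (so it is still proper, with proper closure, for $\eps$ small). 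Then $f^n(\widehat U)\subset\widehat U$ for all $n\geq 0$ contradicts topological transitivity applied to the pair of nonempty open sets $\widehat U$ and $M\setminus\overline{\widehat U}$, and the same region exhibits $\B$ as a proper attractor for $f$ itself. With that adjustment your proof is complete and coincides in spirit with the paper's.
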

\begin{proof}
The set $\B$ is compact, $f$-invariant, has empty interior and is saturated by the center-unstable foliation (see Corollary \ref{saturated}). In particular, $\B$ is transversally stable and not all $M$, and thus a proper attractor. This implies that $f$ cannot be transitive.
\end{proof}

The aim of the rest of the work is to see that same thesis follows in the non transitive scenario.

We finish this section noting that, by what we have done by this point and the work of Gogolev in \cite{Gog}, one is already able to discard the Sullivan foliation \cite{S} as being the center foliation of a dynamically coherent partially hyperbolic system:

\begin{obs}\label{NoSullivan}
The Sullivan foliation \cite{S} cannot be the center foliation of a dynamically coherent partially hyperbolic diffeomorphism.
\end{obs}
\begin{proof}
The example given by Sullivan is a foliation by circles in a five dimensional compact space $M$ with unbounded length of leaves. 

Assume that Sullivan's foliation is the center foliation of a dynamically coherent partially hyperbolic diffeomorphism.

As the center foliation is one dimensional (in a five dimensional manifold), Gogolev's work implies that it has uniformly bounded volume of leaves if $\dim(E^s)= 2$ and $\dim(E^s)= 2$ (see Main Theorem in \cite{Gog}).

It remains to rule out the codimension one case. Assume without loss of generality that $\dim(E^u)=1$.

Let us denote the Sullivan foliation by $\F$. In this particular case, the bad set $\B$ of $\F$ has the structure of the unitary tangent bundle $T^1S^2$ of a $2$-sphere. Moreover, the leaves of the foliation in $\B$ are exactly the fibers of this unitary tangent bundle. 

By the Corollary \ref{saturated} the set $\B$ is saturated by the center-unstable foliation, and so center-unstable leaves foliate $\B$.

Given a center leaf $\C$ we have that $\Hol^u(\C)=\Id$ and then $\C$ has a neighbourhood $\C\times (-\delta,\delta)^u$ in $W^{cu}(\C)$ such that each $\C\times \{y\}$ is a center leaf (see Figure \ref{fig1}).

This implies that the center-unstable foliation in $\B\simeq T^1S^2$ projects to a (topological) foliation without singularities in the base $S^2$. This cannot be.
\end{proof}

\section{Proof of the non transitive case}\label{four}

We saw in the previous section that $f$ cannot be transitive if the bad set $\B$ of $\W^c$ is not empty. Lose the transitivity hypothesis and assume that $\B$ is not empty. We are going to see that this yields a contradiction.

\subsection{Construction of the repeller $\R$}\label{Rconstruction}

Let us consider $\R$ the repelling set induced by the attactor set $\B$ (see Figure \ref{figR}): $$\R=M \setminus \bigcup_{x \in \B}W^s(x).$$ 

We will next closely study $\R$. In particular, we are going to see that is saturated by center-stable leaves.

\vspace{0.2cm}
\begin{figure}[htb]
  \centering
  \def\svgwidth{200pt} 
\begingroup%
  \makeatletter%
  \providecommand\color[2][]{%
    \errmessage{(Inkscape) Color is used for the text in Inkscape, but the package 'color.sty' is not loaded}%
    \renewcommand\color[2][]{}%
  }%
  \providecommand\transparent[1]{%
    \errmessage{(Inkscape) Transparency is used (non-zero) for the text in Inkscape, but the package 'transparent.sty' is not loaded}%
    \renewcommand\transparent[1]{}%
  }%
  \providecommand\rotatebox[2]{#2}%
  \newcommand*\fsize{\dimexpr\f@size pt\relax}%
  \newcommand*\lineheight[1]{\fontsize{\fsize}{#1\fsize}\selectfont}%
  \ifx\svgwidth\undefined%
    \setlength{\unitlength}{391.3475724bp}%
    \ifx\svgscale\undefined%
      \relax%
    \else%
      \setlength{\unitlength}{\unitlength * \real{\svgscale}}%
    \fi%
  \else%
    \setlength{\unitlength}{\svgwidth}%
  \fi%
  \global\let\svgwidth\undefined%
  \global\let\svgscale\undefined%
  \makeatother%
  \begin{picture}(1,0.47513046)%
    \lineheight{1}%
    \setlength\tabcolsep{0pt}%
    \put(0,0){\includegraphics[width=\unitlength,page=1]{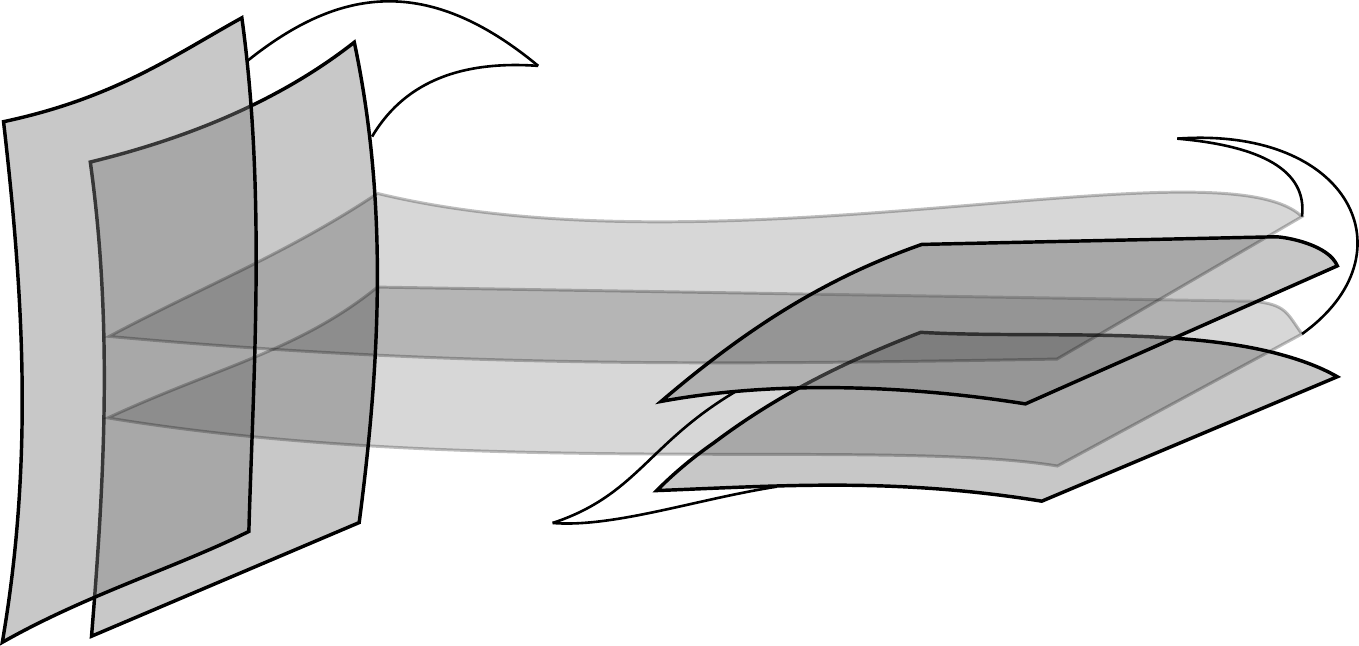}}%
    \put(0.34479636,0.07862604){\color[rgb]{0,0,0}\makebox(0,0)[lt]{\lineheight{1.25}\smash{\begin{tabular}[t]{l}$\R$\end{tabular}}}}%
    \put(0.40566243,0.4059389){\color[rgb]{0,0,0}\makebox(0,0)[lt]{\lineheight{1.25}\smash{\begin{tabular}[t]{l}$\B$\end{tabular}}}}%
    \put(0.69636654,0.36060133){\color[rgb]{0,0,0}\makebox(0,0)[lt]{\lineheight{1.25}\smash{\begin{tabular}[t]{l}$W^{s}(\B)$\end{tabular}}}}%
  \end{picture}%
\endgroup%

  \caption{}\label{figR}
\end{figure}
\vspace{0.2cm}

For every $r>0$, let us denote $\Ball^s_r(\B)=\bigcup_{x \in \B} \Ball^s_r(x)$.

\begin{lema} The set $\R$ is non-empty, $f-$invariant, compact and saturated by the stable foliation. 
\end{lema}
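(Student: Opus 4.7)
The plan is to verify the four asserted properties. Invariance under $f$ and $\W^s$-saturation are formal; the content lies in showing that $W^s(\B)$ is open in $M$ (giving compactness of $\R$) and that there is a trapping neighborhood of $\B$ whose complement is non-empty (giving non-emptyness of $\R$).

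For $f$-invariance I use that $\B$ is $f$-invariant and that $f(W^s(x)) = W^s(f(x))$ for every $x \in M$, so $f(W^s(\B)) = W^s(\B)$ and hence $\R = M \setminus W^s(\B)$ is $f$-invariant. For $\W^s$-saturation, $W^s(\B)$ is by construction a union of full stable leaves, so its complement $\R$ is also a union of stable leaves.

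For compactness I show that $W^s(\B)$ is open in $M$. Fix $q \in \B$; by Corollary~\ref{saturated} the bad set $\B$ is $\W^{cu}$-saturated, so an open disk $V$ of $W^{cu}(q)$ through $q$ lies entirely in $\B$. By the local product structure of partial hyperbolicity, for small enough $\epsilon$ the set $N_q := \bigcup_{y \in V} \Ball^s_\epsilon(y)$ is an open neighborhood of $q$ in $M$ and is contained in $W^s(\B)$. Taking the union over $q \in \B$ yields an open neighborhood $N$ of $\B$ inside $W^s(\B)$. For a general $p \in W^s(\B)$, pick $q \in \B$ with $p \in W^s(q)$; then $d(f^n(p), f^n(q)) \to 0$ while $f^n(q) \in \B$, so $f^n(p) \in N$ for some $n \geq 0$, giving $p \in f^{-n}(N)$, which is open and contained in $W^s(\B)$ by $f$-invariance.

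For non-emptyness I produce a trapping neighborhood $U$ of $\B$ with $f(\bar U) \subset U$ and $\bar U \neq M$, and then pass to the dual repeller. The $\W^{cu}$-saturation of $\B$ plus the local product structure show that a stable tube $U := \bigcup_{q \in \B} \Ball^s_\epsilon(q)$ is an open neighborhood of $\B$; uniform contraction of $f$ on $E^s$ gives $f(\bar U) \subset U$ (for $\epsilon$ small); and because $\B$ has empty interior, $\epsilon$ can be chosen small enough that $\bar U \neq M$. Then $f(\bar U) \subset U$ implies $f^{-1}(M \setminus U) \subset M \setminus U$, so $\{f^{-n}(M \setminus U)\}_{n \geq 0}$ is a decreasing sequence of non-empty compact sets, and its intersection is non-empty by compactness. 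This intersection coincides with $\R$: if $p \in W^s(x)$ for some $x \in \B$, then $f^n(p)$ converges to $\B \subset U$ and hence lies in $U$ eventually; conversely if $f^n(p) \in U \subset W^s(\B)$ for some $n$, then $p \in f^{-n}(W^s(\B)) = W^s(\B)$ by the $f$-invariance of $W^s(\B)$. The hard part is the construction of $U$, which simultaneously uses the $\W^{cu}$-saturation of $\B$ (so that the stable tube is genuinely open in $M$), the uniform $E^s$-contraction (to make $U$ trapping), and $\B$ having empty interior (to force $\bar U \neq M$).
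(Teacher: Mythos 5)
Your proof is correct and follows essentially the same route as the paper: openness of $W^s(\B)=\bigcup_{x\in\B}W^s(x)$ deduced from the $\W^{cu}$-saturation of $\B$ (hence $\R$ is closed, so compact), the formal observations for $f$-invariance and $\W^s$-saturation, and a compactness-plus-stable-contraction argument for non-emptiness (the paper writes $M=\bigcup_n\Ball^s_n(\B)$ and pushes forward to trap $M$ near the proper set $\B$; you build a trapping stable tube and take the nested intersection — the same idea in different packaging). The only caveat is cosmetic: with the paper's definition of partial hyperbolicity the stable contraction is only guaranteed for $f^{\ell}$, so your trapping estimate $f(\overline{U})\subset U$ should be stated for $f^{\ell}$ or for an adapted metric, which changes nothing since $\B$, $W^s(\B)$ and hence $\R$ are the same for $f$ and $f^{\ell}$.
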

\begin{proof}
Since $\B$ is saturated by leaves of $\W^{cu}$ then $\bigcup_{x \in \B}W^s(x)$ is open. This implies that $\R$ is compact.

Let us see that $\R$ is not empty. The set $\bigcup_{x \in \B}W^s(x)$ is the increasing union of the open sets $\{\bigcup_{x \in \B} \Ball^s_n(x)\}_n$. If it were the case that $M=\bigcup_{x \in \B}W^s(x)$, then $M$ would coincide with $\Ball^s_{n_0}(\B)$ for some $n_0$. Then $f^{-n}(M)$ would be contained in a small neighbourhood of the proper compact subset $\B$ for some big enough $n$ which of course cannot be.

Since $\bigcup_{x \in \B}W^s(x)$ is $f$-invariant and saturated by $\W^s$ then the same is the case for $\R$.

\end{proof}

We are on the way to prove that $\R$ is also saturated by the center foliation. The proof will rely on the following three lemmata.

The next lemma is the main observation that will allow us to continue to work in a neighbourhood of $\R$ as if the center foliation were uniformly compact. 
  
\begin{lema}\label{Blargas} For every $\epsilon >0$ there exists $K>0$ such that if $\C$ is a center leaf with $\mathrm{vol}(\C)>K$ then $\C\subset \Ball^s_\epsilon(\B)$. 
\end{lema}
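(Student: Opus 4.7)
The plan is to prove the contrapositive by showing that the set $M \setminus \Ball^s_\epsilon(\B)$ is contained in a compact subset of $M \setminus \B$ on which the volume function is bounded.

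First I would establish the following enclosure: for every $\epsilon > 0$ there is an open neighbourhood $U$ of $\B$ in $M$ such that $U \subset \Ball^s_\epsilon(\B)$. The argument is local and uses that $\B$ is $\W^{cu}$-saturated (Corollary \ref{saturated}). Around each $y \in \B$, the transversality of $\W^s$ and $\W^{cu}$ together with continuity of these foliations gives a local product chart $\varphi_y : V^s_y \times V^{cu}_y \to M$ with $\varphi_y(V^s_y \times \{y\}) \subset \Ball^s_{\epsilon/2}(y)$, such that $\varphi_y(V^s_y \times \{w\})$ is contained in a stable leaf and $\varphi_y(\{s\} \times V^{cu}_y)$ in a center-unstable leaf. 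For any $z = \varphi_y(s,w)$, the point $y_w := \varphi_y(0,w)$ lies in $V^{cu}_y \subset W^{cu}(y) \subset \B$, and $z \in W^s(y_w)$; by shrinking $V^s_y$ and $V^{cu}_y$ if necessary, continuity of the stable holonomy along $W^{cu}$ ensures $d^s(z,y_w) < \epsilon$, so $\varphi_y(V^s_y\times V^{cu}_y) \subset \Ball^s_\epsilon(\B)$. Since $\B$ is compact, finitely many such charts cover it; their union is the desired $U$.

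Next, the complement $F := M \setminus U$ is a closed subset of $M$ disjoint from $\B$, hence compact and contained in $M \setminus \B$. By Proposition \ref{Holinfty}, $|\Hol(\C_x)| < \infty$ for every $x \in F$, so the Generalized Reeb Stability Theorem \ref{Reeb} provides a $\W^c$-saturated neighbourhood of $\C_x$ in which every center leaf is a finite covering of $\C_x$ with at most $|\Hol(\C_x)|$ sheets, and thus has volume at most $|\Hol(\C_x)|\cdot \mathrm{vol}(\C_x)$. Therefore $\mathrm{vol}$ is locally bounded on $F$, and compactness yields a uniform bound $K > 0$.

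Finally, if $\C$ is any center leaf with $\mathrm{vol}(\C) > K$, then $\C$ cannot meet $F$ (otherwise the bound would be violated), so $\C \subset U \subset \Ball^s_\epsilon(\B)$, which is the desired conclusion. The only delicate point is the local step, namely controlling the intrinsic stable distance $d^s(z,y_w)$ uniformly inside the product chart; this is handled by the continuity of the stable holonomy along $W^{cu}$-paths, and then compactness of $\B$ upgrades the local estimate to a uniform $\eta > 0$.
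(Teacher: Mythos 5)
Your argument is correct, and at its core it is the same compactness argument as the paper's, just run in the contrapositive direction: the paper assumes the conclusion fails, takes $x_n\in M\setminus \Ball^s_\epsilon(\B)$ with $\mathrm{vol}(\C_{x_n})\to\infty$, passes to a limit $x\notin \Ball^s_\epsilon(\B)$ at which $\mathrm{vol}$ is not locally bounded, and contradicts $x\in\B$. What you add is the explicit first step, namely that $\Ball^s_\epsilon(\B)$ contains an open neighbourhood $U$ of $\B$, obtained from the $\W^{cu}$-saturation of $\B$ (Corollary \ref{saturated}) together with the local product structure of $\W^s$ and $\W^{cu}$; this is a genuine service, since the paper's sequential argument implicitly needs exactly this point (the assertion that the limit $x$ stays in $M\setminus\Ball^s_\epsilon(\B)$ amounts to $\Ball^s_\epsilon(\B)$ being open, which again rests on the $cu$-saturation of $\B$), so your write-up makes visible where Corollary \ref{saturated} enters. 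Two small remarks on your second step: the detour through Proposition \ref{Holinfty} and Theorem \ref{Reeb} is unnecessary, because local boundedness of $\mathrm{vol}$ at every point of $M\setminus\B$ is literally the definition of the bad set, so compactness of $F=M\setminus U$ already gives the uniform bound $K$; and the estimate ``a $k$-sheeted covering leaf has volume at most $k\cdot\mathrm{vol}(\C_x)$'' is not exact (the nearby leaf carries the metric induced from $M$, not the pullback metric from $\C_x$), though it holds up to a factor close to $1$ on a small enough Reeb neighbourhood, so nothing breaks. With those cosmetic adjustments the proof is complete and, if anything, slightly more careful than the published one.
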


\begin{proof}
Fix $\epsilon >0$ and suppose that there is no such $K$. Then there exists a sequence $(x_n)_{n\in \mathbb{N}}$ in $M\setminus \Ball^s_\epsilon(\B)$ such that $\mathrm{vol}(\C_{x_n})\xrightarrow{n}\infty$. By taking a convergent subsequence $x_{n_k}\xrightarrow{k} x\in M\setminus \Ball^s_\epsilon(\B)$ we obtain that $\mathrm{vol}$ is not locally bounded in $x$. So $x$ should be a point of $\B$ but that is impossible since $x\in M\setminus \Ball^s_\epsilon(\B)$
\end{proof}

The following lemma relates diameter and volume of leaves:

\begin{lema}\label{voldiam} For every $K>0$ there exists $D>0$ such that if $\C$ is a center leaf with $\mathrm{vol}(\C)<K$ then $\diam(\C)<D$.
\end{lema}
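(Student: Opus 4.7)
The plan is a straightforward volume-packing argument that uses only the compactness of $M$ and the uniform local $C^1$ geometry of $\W^c$; in particular, neither partial hyperbolicity nor compactness of the leaves will play a role. The starting point is the uniform local volume estimate
\[
\vol\bigl(\Ball^c_{r_0}(x)\bigr)\;\geq\;v_0 \quad\text{for every }x\in M,
\]
for suitable constants $r_0>0$ and $v_0>0$, where $\Ball^c_{r_0}(x)$ denotes the intrinsic ball of radius $r_0$ inside $\W^c(x)$. Since $E^c$ is a continuous distribution on the compact manifold $M$ and the leaves of $\W^c$ are $C^1$ and vary continuously in the $C^1$ topology, every point of $M$ admits a foliation chart of uniform size in which center plaques are uniform $C^1$-graphs over $E^c$; inside such a chart an intrinsic ball of sufficiently small radius $r_0$ is bilipschitz to a Euclidean ball of the same radius with uniform constants, and the bound then follows from compactness of $M$.

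Granted this estimate, the remainder is a packing computation. Suppose $\C$ is a center leaf with intrinsic diameter $\diam(\C)\geq D$, and pick $x,y\in \C$ whose intrinsic distance in $\C$ is at least $D$. By Hopf--Rinow, applied to the compact Riemannian manifold $\C$, there exists a minimizing geodesic $\gamma\colon [0,L]\to \C$ joining $x$ to $y$, with $L\geq D$. Set $N:=\lfloor D/(2r_0)\rfloor$ and $x_i:=\gamma(2r_0\,i)$ for $i=0,1,\ldots,N$. Because $\gamma$ is minimizing, the intrinsic distance between $x_i$ and $x_j$ is exactly $2r_0|i-j|\geq 2r_0$ whenever $i\neq j$, so the intrinsic balls $\Ball^c_{r_0}(x_i)$ are pairwise disjoint subsets of $\C$. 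Summing the uniform lower bound,
\[
\vol(\C)\;\geq\;\sum_{i=0}^{N}\vol\bigl(\Ball^c_{r_0}(x_i)\bigr)\;\geq\;(N+1)\,v_0\;\geq\;\frac{v_0}{2r_0}\,D,
\]
and the hypothesis $\vol(\C)<K$ then forces $D<2r_0K/v_0$. Taking $D:=2r_0K/v_0$ (or any larger constant) proves the lemma by contrapositive.

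The only step that is not completely formal is the uniform local volume estimate of the first paragraph; I view it as the main (and really the only) technical point. It is nevertheless standard, reducing to the well-known fact that the center foliation of a dynamically coherent partially hyperbolic diffeomorphism on a compact manifold has uniformly controlled $C^1$ local geometry.
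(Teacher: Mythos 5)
Your argument is correct, but it is organized differently from the paper's. The paper covers $M$ by finitely many foliated boxes $U_1,\dotsc,U_l$ of $\W^c$, takes $v=\min_i v_i$ a uniform lower bound for the volume of a center plaque and $d=\max_i d_i$ a uniform upper bound for its diameter, observes that $\vol(\C)<K$ forces $\C$ to meet each box in fewer than $K/v+1$ plaques, and then chains plaques to get $\diam(\C)<ld(K/v+1)$. You instead prove the contrapositive by a ball-packing argument: a uniform lower bound $\vol(\Ball^c_{r_0}(x))\geq v_0$ plus disjoint intrinsic balls centered along a shortest path joining two far-apart points of $\C$ gives $\vol(\C)\geq \frac{v_0}{2r_0}\diam(\C)$, hence $D=2r_0K/v_0$ works. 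Both proofs rest on exactly the same nontrivial input --- the uniform local geometry of $\W^c$ coming from compactness of $M$ and continuity of $E^c$ (your $v_0$ is the paper's $v$; your chart-size $r_0$ plays the role of the paper's plaque diameter bound $d$) --- and both are elementary; yours yields a cleaner explicit linear bound in $K$ and, as you note, uses nothing about $f$, while the paper's plaque count avoids any discussion of geodesics. One small caveat: the leaves of $\W^c$ are only $C^1$ with a continuous induced metric, so the classical smooth Hopf--Rinow theorem does not literally apply; but your argument only needs a shortest path between two points of the compact leaf (so that distances between the $x_i$ equal parameter differences), and such paths exist in any compact length space (Hopf--Rinow--Cohn-Vossen, or Arzel\`a--Ascoli), so the packing step goes through verbatim. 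With that phrasing adjusted, the proof is complete; note also that you bound the intrinsic diameter, which is the version actually used later in Proposition \ref{csaturated}.
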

\begin{proof}
Take $\{U_i\}_{i=1,\dotsc,l}$ a finite covering of $M$ by foliated boxes of the center foliation. For each $i\in \{1,\dotsc,l\}$ denote $d_i$ the supremum of the diameter of center plaques in $U_i$ and $v_i$ the infimum of the volume of center plaques in $U_i$. Denote $d=\max_i \{d_i\}$ and $v=\min_i \{v_i\}$. 

Now let $\C$ be a center leaf with $\mathrm{vol}(\C)<K$. We have that $\C$ has less than $\frac{K}{v}+1$ plaques in each $U_i$ and then $\diam(\C)<ld (\frac{K}{v}+1)=D$.
\end{proof}

From the continuity of $\W^c$ we have:

\begin{lema}\label{continuosW^c}
Given $D>0$ and $\epsilon>0$ there exists $\delta>0$ such that for any $x$ and $y$ with $\mathrm{d}(x,y)<\delta$ we have that $\Ball^c_D(y)\subset \Ball_\epsilon (\C_x)$.
\end{lema}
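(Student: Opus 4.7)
I would read this as a standard uniform continuity property of the center foliation on the compact manifold $M$: near $x$, the nearby leaves are close in the Hausdorff sense out to any fixed intrinsic radius. The idea is to propagate the hypothesis $d(x,y) < \delta$ along a center path of length at most $D$, chart by chart.

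First I would fix a finite cover $\{U_j\}_{j=1}^N$ of $M$ by foliation boxes of $\W^c$ with a Lebesgue number $\rho > 0$, so that every open ball of radius $\rho$ in $M$ lies in some $U_j$. In each $U_j$ the plaques of $\W^c$ are graphs over a transversal and depend continuously on their basepoint; since the cover is finite, I can extract a common modulus of continuity, i.e.\ a function $\omega \colon [0,\rho) \to [0,\infty)$ with $\omega(t) \to 0$ as $t \to 0^+$, such that whenever $a, b$ lie in a common $U_j$ with $d(a,b) \leq t$, the plaques of $\W^c|_{U_j}$ through $a$ and through $b$ are within Hausdorff distance $\omega(t)$.

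Next, given $y' \in \Ball^c_D(y)$, I would pick a $C^1$ path in $\C_y$ from $y$ to $y'$ of intrinsic length at most $D$ and discretize it as $y = z_0, z_1, \ldots, z_K = y'$ with $d^c(z_{i-1}, z_i) < \rho/2$ and $K = \lceil 2D/\rho \rceil$. Since intrinsic distance dominates extrinsic distance, each subarc from $z_{i-1}$ to $z_i$ sits in the ball of radius $\rho$ around $z_{i-1}$, hence inside some $U_{j_i}$, and so $z_i$ lies in the plaque of $\C_y$ through $z_{i-1}$ in $U_{j_i}$. The number $K$ depends only on $D$, not on $y$ or the path. Then, setting $w_0 = x$, I would inductively build $w_i \in \C_x$ as follows: if $d(z_{i-1}, w_{i-1}) < \rho/2$, the points $z_{i-1}$ and $w_{i-1}$ share the box $U_{j_i}$; the plaque of $\C_x$ through $w_{i-1}$ in $U_{j_i}$ lies within Hausdorff distance $\omega(d(z_{i-1}, w_{i-1}))$ of the plaque of $\C_y$ through $z_{i-1}$, so it contains a point $w_i$ with $d(z_i, w_i) \leq \omega(d(z_{i-1}, w_{i-1}))$. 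By induction $d(z_i, w_i) \leq \omega^{\circ i}(d(x,y))$.

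Finally, since $\omega$ is continuous with $\omega(0) = 0$ and $K$ is a fixed positive integer, I can choose $\delta$ small enough that $\omega^{\circ K}(\delta) < \min\{\epsilon, \rho/2\}$; this automatically keeps every $d(z_{i-1}, w_{i-1})$ below $\rho/2$, validating the induction. Then $w_K \in \C_x$ satisfies $d(y', w_K) < \epsilon$, so $y' \in \Ball_\epsilon(\C_x)$, as required. The only delicate point is the bookkeeping of the iteration: the modulus $\omega$ has no built-in contraction, so $\delta$ must be chosen iteratively small in $K$ applications of $\omega$. This is harmless because $K$ is a fixed finite number depending only on $D$, and the uniformity of $\omega$ across the finite foliated atlas is what makes the argument work globally on the compact manifold $M$.
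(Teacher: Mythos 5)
Your argument is correct, and in fact the paper offers no proof of this lemma at all: it is stated as an immediate consequence of the continuity of $\W^c$ (``From the continuity of $\W^c$ we have:''). Your chart-by-chart chaining argument is exactly the standard way to substantiate that assertion: a finite foliated atlas with a Lebesgue number $\rho$, a uniform Hausdorff modulus of continuity $\omega$ for plaques (which, as you implicitly use, requires taking boxes whose closures are compact inside slightly larger charts --- harmless since $M$ is compact), discretization of a center path of length at most $D$ into $K=\lceil 2D/\rho\rceil$ steps of intrinsic length $<\rho/2$, and a $K$-fold iteration of $\omega$, with $K$ depending only on $D$ and $\rho$. The one point worth tightening is the bookkeeping you flag yourself: for $\omega^{\circ K}(\delta)<\min\{\epsilon,\rho/2\}$ to control all intermediate errors $d(z_i,w_i)\le\omega^{\circ i}(\delta)$ and keep the induction alive, you should normalize $\omega$ to be nondecreasing with $\omega(t)\ge t$ (always possible by replacing $\omega(t)$ with $\sup_{s\le t}\max\{\omega(s),s\}$), or else choose $\delta$ by a backward recursion over the $K$ steps; either fix is routine, and with it the proof stands.
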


We can now prove:

\begin{prop}\label{csaturated} The set $\R$ is saturated by the center foliation. 
\end{prop}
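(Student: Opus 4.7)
The plan is to proceed by contradiction: assume there exist $y\in\R$ and $z\in\C_y$ with $z\notin\R$, so that $z\in W^s(x_0)$ for some $x_0\in\B$. The strategy is to examine the forward orbit of $y$ and derive a contradiction between two antagonistic facts about it: on one hand, $f^n(y)$ must remain at a definite distance from $\B$ because $\R$ is $f$-invariant and disjoint from $\B$; on the other, $f^n(y)$ must cluster onto $\B$ because it shares a center leaf with $f^n(z)$, and $f^n(z)$ is pulled onto $\B$ by the stable contraction.

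For the separation, every point of $\B$ lies in its own stable leaf, so $\B\subset\bigcup_{x\in\B}W^s(x)=M\setminus\R$, which gives $\R\cap\B=\emptyset$; as both sets are compact, $\eta:=d(\R,\B)>0$. Because $\R$ is $f$-invariant, $f^n(y)\in\R$ and hence $d(f^n(y),\B)\geq\eta$ for every $n\in\ZZ$.

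For the clustering, fix $\eps\in(0,\eta)$, let $K>0$ be the constant furnished by Lemma \ref{Blargas} for this $\eps$, let $D>0$ be the constant furnished by Lemma \ref{voldiam} for this $K$, and let $\delta>0$ be the constant furnished by Lemma \ref{continuosW^c} for these $D$ and $\eps$. Since $z\in W^s(x_0)$ and $\B$ is $f$-invariant, $d(f^n(z),f^n(x_0))\to 0$, so for all sufficiently large $n$ we have $d(f^n(z),f^n(x_0))<\delta$. I would then split on the volume of $\C_{f^n(z)}$: if $\vol(\C_{f^n(z)})>K$, Lemma \ref{Blargas} yields $\C_{f^n(z)}\subset\Ball^s_\eps(\B)$, and since intrinsic stable distance dominates ambient distance, $\C_{f^n(z)}\subset\Ball_\eps(\B)$; if $\vol(\C_{f^n(z)})\leq K$, Lemma \ref{voldiam} yields $\diam(\C_{f^n(z)})\leq D$, whence $\C_{f^n(z)}\subset\Ball^c_D(f^n(z))$, and Lemma \ref{continuosW^c} applied with $x=f^n(x_0)$ and $y=f^n(z)$ gives $\Ball^c_D(f^n(z))\subset\Ball_\eps(\C_{f^n(x_0)})\subset\Ball_\eps(\B)$. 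In either case $f^n(y)\in\C_{f^n(z)}\subset\Ball_\eps(\B)$, contradicting $d(f^n(y),\B)\geq\eta>\eps$.

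The main delicate point is the large-volume case: one might worry that even when $f^n(z)$ approaches $\B$, the entire center leaf through $f^n(z)$ could protrude far from $\B$ and carry $f^n(y)$ away. Lemma \ref{Blargas} is precisely what prevents this by forcing every sufficiently large-volume center leaf into a stable $\eps$-tube around $\B$; combined with Lemma \ref{voldiam} and the continuity Lemma \ref{continuosW^c} to treat the complementary bounded-volume regime, the two cases jointly close the dichotomy.
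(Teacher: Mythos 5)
Your proof is correct and follows essentially the same route as the paper: stable contraction pulls $f^n(z)$ toward $\B$, and Lemmas \ref{Blargas}, \ref{voldiam} and \ref{continuosW^c} force the whole center leaf through $f^n(y)$ into an $\eps$-neighborhood of $\B$, contradicting the $f$-invariance of $\R$ together with $d(\R,\B)>0$. The only cosmetic difference is that you case-split on the volume of the iterated leaf, whereas the paper applies Lemma \ref{Blargas} once at the outset (with $\eps<d(\R,\B)$) to bound the volume of every leaf meeting $\R$, which renders your large-volume case vacuous.
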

\begin{proof}
Suppose that there exists $x\in \R$ and $y\in \C_x$ such that $y\in M\setminus \R$. Since $y\in M\setminus \R$ there exists $w\in \B$ such that $y\in W^s(w)$.

Denote $d=\mathrm{d}(\B,\R)>0$. Note by Lemma  \ref{Blargas} that $\mathrm{vol}$ is bounded in $\R$, say by some constant $K>0$. 

By Lemma \ref{voldiam} there exists $D>0$ such that for every center leaf $\C$ with $\vol(\C)<K$ the diameter of $\C$ is less than $D$. So for every $z\in \R$ we have that $\Ball^c_D(z)=\C_z$.

We can now consider $N$ large enough so that, by Lemma \ref{continuosW^c}, the points $f^N(w)$ and $f^N(y)$ are as close as to assure that $\Ball^c_D(f^N(y))\subset \Ball_{d/2}(\B)$. This yields a contradiction since $f^N(y)\in \C_{f^N(x)}$ and $f^N(x)\in \R$ since $\R$ is $f$-invariant. This shows that for every $x\in \R$ the leaf $\C_x \subset \R$.
\end{proof}

\subsection{Completeness and trivial holonomy for center-stable leaves in $\R$}\label{completeandbundle}

In this section we prove some properties of $\R$ in order to implement the proof of Hiraide in Section \ref{Hirai}.

Let us first see in the following proposition that center-stable leaves in $\R$ are \textit{complete} (this terminology is used in \cite{BoW}, \cite{Car} and \cite{BoBo}).

\begin{prop}\label{completeness}
For every center leaf $\C$ in $\R$ we have that $W^{cs}(\C)=\bigcup_{x\in \C}W^s(x)$.
\end{prop}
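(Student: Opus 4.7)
The plan is to set $V:=\bigcup_{x\in\C}W^s(x)$ and prove $V=W^{cs}(\C)$ by showing that $V$ is non-empty, open, and closed in the connected manifold $W^{cs}(\C)$; non-emptiness is immediate since $\C\subset V$.

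The first preparation is to observe that $W^{cs}(\C)\subset\R$. Indeed, $\R$ is saturated both by $\W^s$ (from the preceding lemma) and by $\W^c$ (Proposition \ref{csaturated}); since locally $\W^{cs}$-plaques are products of $\W^s$- and $\W^c$-plaques, $\R$ is also $\W^{cs}$-saturated. Consequently, every center leaf $\C'$ inside $W^{cs}(\C)$ lies in $\R$, and hence has volume at most $K$ (Lemma \ref{Blargas}), diameter at most $D$ (Lemma \ref{voldiam}), and finite holonomy (Proposition \ref{Holinfty}). Combining this with compactness of $\R$ and Remark \ref{Reebproduct} yields a uniform $\delta>0$ such that for every $\C'\subset\R$ the stable tube $\U^s_\delta(\C'):=\bigcup_{z\in\C'}\Ball^s_\delta(z)\subset W^{cs}(\C')$ is a Reeb neighborhood: it is $\W^c$-saturated and fibered over $\C'$ by stable disks.

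The main step, and the one I expect to be the chief obstacle, is to prove that $V$ is $\W^c$-saturated. The idea is to exploit the uniform $W^s$-contraction of $f$: given $y\in V$, write $y\in W^s(x)$ with $x\in\C$, and pick $w\in\C_y$. For $N$ large, $d_{W^s}(f^N(y),f^N(x))<\delta$, so $f^N(y)\in\U^s_\delta(f^N(\C))$. Since this uniform Reeb neighborhood is $\W^c$-saturated, the entire leaf $f^N(\C_y)=\C_{f^N(y)}$ lies in it, and so $f^N(w)$ belongs to a stable disk of some $z\in f^N(\C)$. Applying $f^{-N}$, which preserves $\W^s$ and sends $f^N(\C)$ back to $\C$, we conclude $w\in W^s(f^{-N}(z))$ with $f^{-N}(z)\in\C$, that is $w\in V$.

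With $V$ known to be $\W^c$-saturated, openness and closedness of $V$ in $W^{cs}(\C)$ follow by inspecting the Reeb neighborhood $\U^s_\delta(\C_y)$ at each $y$. For openness, $\C_y\subset V$ by the previous paragraph, and every point of $\U^s_\delta(\C_y)$ lies on a stable disk through a point of $\C_y\subset V$, hence belongs to $V$. For closedness, if $y_n\in V$ converges to $y\in W^{cs}(\C)$, then eventually $y_n\in\U^s_\delta(\C_y)$, so $y_n$ lies in the stable disk of some $z_n\in\C_y$; since $W^s(z_n)=W^s(y_n)$ meets $\C$, the point $z_n$ lies in $\C_y\cap V$, and the $\W^c$-saturation of $V$ forces $\C_y\subset V$, whence $y\in V$. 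Connectedness of $W^{cs}(\C)$ then yields $V=W^{cs}(\C)$.
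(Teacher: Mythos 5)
Your overall architecture (open--closed in the connected leaf $W^{cs}(\C)$, with the heart of the matter being $\W^c$-saturation of $V=\bigcup_{x\in\C}W^s(x)$) matches the paper's, and your preliminary observation that $W^{cs}(\C)\subset\R$, hence all center leaves met along the way have bounded volume and finite holonomy, is fine. The problem is the step you yourself flag as the crux: the assertion that ``compactness of $\R$ and Remark \ref{Reebproduct} yield a uniform $\delta>0$ such that for every center leaf $\C'\subset\R$ the constant-radius tube $\bigcup_{z\in\C'}\Ball^s_\delta(z)$ is $\W^c$-saturated and fibered over $\C'$ by stable disks.'' This is not a consequence of the cited results: Theorem \ref{Reeb} and Remark \ref{Reebproduct} produce, for each individual leaf, a saturated neighborhood of \emph{no controlled size}, and nothing proved so far upgrades this to a size uniform over all leaves of $\R$. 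Worse, as literally stated the claim is doubtful: if $\C'\subset\R$ has nontrivial (finite) stable holonomy, nearby leaves in $W^{cs}(\C')$ multiply cover it and meet a single stable leaf in several points that can be arbitrarily close, so for such nearby leaves $\C''$ the $\delta$-disks are not pairwise disjoint (no fibration), and a constant-radius tube around a leaf is in general not saturated (a leaf can enter the tube with one ``sheet'' and exit with another). What your argument really needs is a uniform local completeness statement --- a single $\delta$ such that any center leaf meeting the $\delta$-tube of any $\C'\subset\R$ lies in $\bigcup_{z\in\C'}W^s(z)$ --- and this uniformization is essentially the hard content of the proposition itself; asserting it without proof leaves the genuine work undone. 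Note also that you truly need the uniformity, since your $N$ depends on $\delta$ while your tube is taken around $f^N(\C)$.

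For comparison: the paper sidesteps all uniformity questions by staying inside the single leaf $W^{cs}(\C)$. It joins $x_0\in\C$ to $y_0$ by a path $\gamma$, covers the compact image $\gamma([0,1])$ by finitely many leafwise Reeb neighborhoods $\U^s(\C_{t_i})$ (each fibered by stable disks, each center-saturated, with consecutive overlaps), and chains them: in each $\U^s(\C_{t_i})$ every center leaf covers $\C_{t_i}$ under the stable-disk projection, so ``meeting the same stable leaves'' propagates from $\C_{t_0}=\C$ to $\C_{y_0}$. If you want to keep your dynamical ($f^N$-contraction) route, the repair is to first prove a uniform statement of the kind you asserted --- e.g.\ a finite cover of $\R$ by center-saturated sets each contained in a stable-disk-fibered Reeb neighborhood, together with a Lebesgue-type number guaranteeing that stably close points of $\R$ fall in a common such set --- which is an argument in the spirit of Proposition \ref{csleavesR}, not a one-line consequence of compactness.
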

\begin{proof}
Let $\C$ be a center leaf in $\R$. If we prove that $\bigcup_{x\in \C}W^s(x)$ is saturated by center leaves then $\bigcup_{x\in \C}W^s(x)$ will be a non empty  open and closed subset of $W^{cs}(\C)$ and then will coincide with $W^{cs}(\C)$.

Let us consider $y_0\in  \bigcup_{x\in \C}W^s(x)$. We want to see that $\C_{y_0}\subset \bigcup_{x\in \C}W^s(x)$. Let $x_0$ be any point in $\C$ and let $\gamma:[0,1]\to W^{cs}(\C)$ be a continuous path from $x_0$ to $y_0$.

For every $t\in [0,1]$ denote by $\C_t$ the center leaf trough $\gamma(t)$.

Recall that by Proposition \ref{Holinfty} the center leaves in $\B$ coincide with those with infinite holonomy group. Thus, since $\R\cap \B=\emptyset$, for every $t\in [0,1]$ we have that $\C_t\subset W^{cs}(\C)$ satisfies $|\Hol^s(\C_t)|< \infty$. 

We can then take for each $\C_t$ a neighborhood $\U^s(\C_t)$ of $\C_t$ in $W^{cs}(\C_t)$ given by the Generalized Reeb stability theorem (see Theorem \ref{Reeb}). The associated projection $\pi_t:\U^s(\C_t)\to \C_t$ can be taken such that $\pi_t^{-1}(x)$ is a disk in $W^s(x)$ for every $x\in \C_t$. (See Remark \ref{Reebproduct}). 

Then $\{\U^s(\C_t)\}_{t\in [0,1]}$ is an open cover of $\gamma([0,1])$. Let us take a finite subcover $\{\U^s(\C_{t_0}),\dotsc,\U^s(\C_{t_k})\}$ such that $\C_{x_0}=\C_{t_0}$, $\C_{y_0}=\C_{t_k}$ and $\U^s(\C_{t_i})\cap \U^s(\C_{t_{i+1}})\neq \emptyset$ for every $0\leq i \leq k-1$. 

Observe that, if $\C'$ is a center leaf in some $\U^
s(\C_{t_i})$, then each stable disk of a point of $\C_{t_i}$ intersects $\C'$. 

Observe also that since each $\U^s(\C_{t_i})$ is saturated by center leaves, so it is each $\U^s(\C_{t_i})\cap \U^s(\C_{t_{i+1}})$.

Then, by taking $\C'_i\subset \U^s(\C_{\gamma(t_i)})\cap \U^s(\C_{\gamma(t_{i+1})})$ we deduce that each stable leaf of $\C_i$ intersects $\C_{i+1}$. 

This implies that $\C_{y_0}\subset \bigcup_{x\in \C}W^s(x)$ as we wanted.
\end{proof}

The following is a mild extension of Lemma \ref{Blargas} that will come handy later.

\begin{lema}
There exists a constant $C>0$ such that for every $x$ in $\R$ we have $\#\{\C_x\cap W^s(x)\}< C$.
\end{lema}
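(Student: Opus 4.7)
The plan is to bound $N(x):=\#(\C_x\cap W^s(x))$ uniformly on $\R$ by combining the bounded center-leaf volume on $\R$ (Lemma \ref{Blargas}) with the $f$-invariance of $N$ and the uniform contraction of $\W^s$ under $f$.

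By compactness of $M$ and continuity of the sub-bundles $E^s, E^c, E^u$, I would first fix a uniform scale $\rho>0$ such that every $p\in M$ is the center of a foliated product chart $V_p\cong D^s\times D^c$ of $\W^{cs}$ containing the intrinsic-radius-$\rho$ stable ball $\Ball^s_\rho(p)$, in which $\W^s$ and $\W^c$ appear as the coordinate sub-foliations. There is then a uniform lower bound $v_0>0$ on the intrinsic $\C$-volume of any $\W^c$-plaque inside such a chart. By Lemma \ref{Blargas} there is $K>0$ with $\vol(\C)\leq K$ for every center leaf $\C\subset\R$; hence every such $\C$ has at most $K_0:=K/v_0$ plaques of $\W^c$ inside any chart $V_p$ as above.

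Now fix $x\in\R$ and let $y_1,\dotsc,y_N$ be the distinct points of $\C_x\cap W^s(x)$. As $f$ preserves $\W^s$ and $\W^c$, one has $N(x)=N(f^n(x))$ for every $n\geq 0$. Uniform contraction of $\W^s$ under $f$ yields $d^s(f^n(y_i),f^n(x))\to 0$ as $n\to\infty$, where $d^s$ denotes intrinsic distance in $W^s$. For $n$ large enough every $f^n(y_i)$ lies in $\Ball^s_\rho(f^n(x))$, and hence in the chart $V_{f^n(x)}\cong D^s\times D^c$ at $f^n(x)\in\R$; in fact the $f^n(y_i)$ all lie on the same stable sub-plaque $D^s\times\{0\}$ of $V_{f^n(x)}$, namely the one through $f^n(x)$. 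Each $f^n(y_i)$ is also in $\C_{f^n(x)}$, so it sits at the single-point intersection of this stable sub-plaque with some $\W^c$-plaque of $V_{f^n(x)}$; distinct $f^n(y_i)$ therefore lie on distinct $\W^c$-plaques of $\C_{f^n(x)}$ inside $V_{f^n(x)}$. By the previous paragraph applied at $f^n(x)\in\R$, there are at most $K_0$ such plaques, so $N(x)=N(f^n(x))\leq K_0$, and the lemma follows with $C:=K_0+1$.

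The main subtle point is the uniform existence of the product chart $V_p$ containing a definite-size stable ball, which rests on compactness of $M$ and continuity of the three sub-bundles; once this is set up, the rest is a direct packing estimate combining the volume bound on $\C_{f^n(x)}\subset\R$ with the $f$-contraction of $\W^s$ and the $f$-invariance of $N$.
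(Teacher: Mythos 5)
Your proof is correct and follows essentially the same route as the paper: bound the center-leaf volume on $\R$ via Lemma \ref{Blargas}, iterate $f$ so that the finitely many intersection points collapse into a single uniform-size foliation chart along the stable leaf, and then count center plaques using a uniform lower volume bound per plaque. The only cosmetic difference is that the paper uses a finite cover of $\R$ by $\W^c$-foliated boxes with a Lebesgue number and argues by contradiction, whereas you use uniform bi-foliated $\W^{cs}$ charts at every point and conclude directly.
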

\begin{proof}
Cover $\R$ by a finite number $\{U_i\}_{1\leq i \leq k}$ of foliated boxes for the $\W^c$ foliation. Let $d>0$ be such that each plaque of each $U_i$ has volume larger than $d$.

By Lemma \ref{Blargas} the volume function is bounded in $\R$, say by some constant $K$.  

Let us see that $\#\{\C_x\cap W^s(x)\}< K/d +1$ for every $x\in \R$.

Suppose there exists $x\in \R$ and distinct points $\{x=x_0,\dotsc,x_l\}\subset \{\C_x\cap W^s(x)\}$ with $l \geq K/d +1$. 

Let $\gamma>0$ be a Lebesgue number for the covering $\{U_i\}_{1\leq i \leq k}$. Then we can consider $N$ large enough as to assure that $\diam(\{f^N(x_0),\dotsc,f^N(x_l)\})<\gamma$ in $W^s(f^N(x_0))$ with the intrinsic topology. So $\{f^N(x_0),\dotsc,f^N(x_l)\}$ is contained in some member $U_{i_0}$ of the covering. 

Since the points $\{f^N(x_0),\dotsc,f^N(x_l)\}$ are close in $W^s(f^N(x_0))$ in the intrinsic topology, then trough each one of them there correspond a different plaque of $U_{i_0}$. On the other hand, $\{f^N(x_0),\dotsc,f^N(x_l)\}\subset f^N(\C_x)$. This contradicts the fact that $\vol (f^N(\C_x)) \leq K$.

\end{proof}

We can now give some kind of a description of center-stable leaves in $\R$ (see Bohnet \cite[Corollary 4.10.]{Boh}  for a similar result):

\begin{prop}\label{csleavesR} 

Let $W$ be a center-stable leaf in $\R$. Then there exists a center leaf $\C$ in $W$ such that for every $x\in\C$ we have that $\C \cap W^s(x)=\{x\}$. Therefore, $W$ is a bundle with base $\C$ and fibers $\{W^s(x)\}_{x\in \C}$.
\end{prop}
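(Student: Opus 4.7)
Plan. The bundle conclusion follows formally once we exhibit a center leaf $\C \subset W$ with $\C \cap W^s(x) = \{x\}$ for every $x \in \C$. Indeed, for such $\C$ and any $y \in W$, completeness (Proposition \ref{completeness}) gives $x \in \C$ with $y \in W^s(x)$; any competitor $x' \in \C \cap W^s(y) = \C \cap W^s(x)$ must coincide with $x$, so the formula $\rho(y) := \C \cap W^s(y)$ defines a continuous surjection $W \to \C$ whose fibers are the stable leaves, yielding the bundle structure.

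To produce such $\C$, assign to each center leaf $\C' \subset W$ the integer $n(\C') := \#\bigl(\C' \cap W^s(x)\bigr)$ for any $x \in \C'$. This cardinality is independent of $x \in \C'$: by continuity of $\W^s$, each intersection point deforms continuously as $x$ moves along $\C'$, giving a locally constant (hence constant, by connectedness) $\Z$-valued function. The preceding lemma shows $n \leq C$ uniformly on $\R$, and $n$ is $f$-invariant because $f$ sends $\C' \cap W^s(x)$ bijectively to $f(\C') \cap W^s(f(x))$. Choose $\C \subset W$ minimizing $n$ and set $m := n(\C)$; the task is to prove $m = 1$.

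Suppose for contradiction that $m \geq 2$, and write $\{x_1, \ldots, x_m\} = \C \cap W^s(x_1)$. Under forward iteration, intrinsic stable distances contract uniformly, so $d^s(f^k(x_i), f^k(x_j)) \to 0$ for all $i, j$. Extract a subsequence $k_j$ with $f^{k_j}(x_1) \to p$; then $f^{k_j}(x_i) \to p$ for every $i$. The leaves $\C_j := f^{k_j}(\C)$ sit in $\R$ with volume bounded by Lemma \ref{Blargas} and diameter bounded by Lemma \ref{voldiam}, so after a further subsequence they converge in Hausdorff distance to a single connected center leaf $\C_\infty$ through $p$ (lower semicontinuity of volume together with the uniform diameter bound keeps the limit from splitting into several leaves, and connectedness of each $\C_j$ forces the Hausdorff limit to be connected). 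By continuity of $\W^s$ near $p$, any $q \in \C_\infty \cap W^s(p)$ sufficiently close to $p$ is a limit of points in $\C_j \cap W^s(f^{k_j}(x_1)) = \{f^{k_j}(x_i)\}_{i=1}^m$, all of which tend to $p$; hence locally $\C_\infty \cap W^s(p) = \{p\}$, a strict drop from the $m$ coincident intersections in each $\C_j$.

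The main obstacle is promoting this local collapse into the global inequality $n(\C_\infty) < m$ that would contradict the minimality of $m$; the danger is that extra points of $\C_\infty \cap W^s(p)$ could reappear far from $p$ along the noncompact leaf $W^s(p)$. The plan to overcome this is to iterate: since $n$ is $f$-invariant and bounded by $C$, and at each limiting step some pair of stable-intrinsic separations among the marked points must shrink to zero, finitely many repetitions of the extraction above must eventually force a strict global descent in $n$, yielding the desired contradiction with the minimality of $\C$ and completing the proof.
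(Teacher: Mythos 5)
Your proposal does not close the key step, and you say so yourself: the ``main obstacle'' you name at the end is exactly the point where the proof has to happen, and the sketch you offer to overcome it (``finitely many repetitions of the extraction must eventually force a strict global descent in $n$'') is not an argument. The local collapse near $p$ only controls intersection points of $\C_\infty$ with $W^s(p)$ that arise as limits of the marked points $f^{k_j}(x_i)$; nothing prevents $\C_\infty\cap W^s(p)$ from having other points far out along the noncompact stable leaf, and repeating the extraction does not change this, since each repetition produces the same purely local information. There is a second, structural problem: your minimality is taken among center leaves of the fixed leaf $W$, but the Hausdorff limit $\C_\infty$ of $f^{k_j}(\C)$ lies in general in a different center-stable leaf (it need not lie in $W$ nor in any iterate $f^k(W)$), so even a genuine inequality $n(\C_\infty)<m$ would not contradict your choice of $\C$, and it would not produce the leaf with trivial stable intersections \emph{inside the given} $W$, which is what the proposition demands. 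A smaller issue is that the well-definedness of $n(\C')$ (independence of the basepoint $x\in\C'$) is asserted from continuity of $\W^s$, but only lower semicontinuity of the count is clear; extra intersection points can appear from far away in the stable leaf as $x$ moves, so local constancy needs justification. (In the paper this is never needed: one works with the pointwise count $\#(\C_x\cap W^s(x))$.)

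The paper's proof runs in the opposite direction and supplies precisely the local-to-global mechanism you are missing. One takes $x_0\in W$ where the count $l=\#(\C_{x_0}\cap W^s(x_0))$ is \emph{maximal}, iterates forward until the $l$ points $f^N(x_0),\dots,f^N(x_{l-1})$ have small intrinsic stable diameter, and hence land in a single stable fiber disk of one of finitely many Generalized Reeb stability neighborhoods $\U^s(\C_i)\times\{t\}$ covering $\R$ (Remark \ref{Reebproduct}). Inside such a neighborhood the projection along stable disks restricts on each center leaf to a finite covering of the base, so $f^N(\C_{x_0})$ meets \emph{every} stable fiber disk in at least $l$ points; if the base leaf $\C_i\times\{t\}$ met some stable leaf twice, $f^N(\C_{x_0})$ would meet that stable leaf in at least $2l$ points, contradicting maximality (which is $f$-invariant). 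The base leaf $\C_i\times\{t\}$ lies in $f^N(W)$, and pulling back by $f^{-N}$ gives the desired leaf in $W$. It is the covering structure of the Reeb neighborhood that converts the local coincidence of the marked points into a global statement about all stable intersections, and it is maximality (not minimality) that turns the resulting doubling into a contradiction; without an ingredient of this kind your limit argument cannot be completed.
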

\begin{proof}

Observe that it is enough to prove what we want for some $f^N(W)$.

Cover $\R$ by finite $(\W^{cs},\W^u)$-product neighborhood $\{\U^s(\C_i)\times [-1,1]\}_{1\leq i \leq k}$ with each $\U^s(\C_i)$ being a Generalized Reeb stability neighborhood of the center leaf $\C_i$ (see Remark \ref{Reebproduct}). 

Let $\gamma>0$ be a Lebesgue number for the covering.

Let $W$ be a center-stable leaf in $\R$. By the previous lemma, we have that $\#\{\C_x\cap W^s(x)\}$ is bounded by a constant $C>0$ for every $x\in W$. So, let us consider $x_0\in W$ such that $l=\#\{\C_{x_0}\cap W^s(x_0)\}$ is maximal in $W$. Let $\{x_0,\dotsc,x_{l-1}\}=\C_{x_0}\cap W^s(x_0)$.

Let $N>0$ be large enough so that $\diam(\{f^N(x_0),\dotsc,f^N(x_{l-1})\})<\gamma$ in $W^s(f^N(x_0))$ with the intrinsic topology. Then there exists $i\in \{1,\dotsc,k\}$ and $t\in [-1,1]$ such that $\{f^N(x_0),\dotsc,f^N(x_{l-1})\}\subset \U^s(\C_i)\times \{t\}$. Moreover, $\{f^N(x_0),\dotsc,f^N(x_{l-1})\}$ belongs to the same $s$-disk in the Generalized Reeb stability neighborhood $\U^s(\C_i)\times  \{t\}$. So each $s$-disk of $\U^s(\C_i)\times \{t\}$ intersects $f^N(\C_{x_0})$ in at least $l$ distinct points.

This implies that $(\C_i\times \{t\}) \cap W^s(x)=\{x\}$ for every $x\in \C_i\times \{t\}$. Otherwise, $f^N(\C_{x_0})$ would intersect some stable leaf in at least $2l$ distinct points and this cannot be since $l=\#\{\C_{x_0}\cap W^s(x_0)\}$ is maximal in $W$ and, therefore, also in $f^N(W)$.
\end{proof}

\begin{obs} For $W$ as in Proposition \ref{csleavesR} we have that the group $\Hol(W)$ is trivial.
Indeed, any closed curve in $W$ is freely homotopic to a closed curve in $\C$, that has trivial unstable holonomy (see Lemma \ref{lemma1}). 
\end{obs}

\subsection{Adapted Hiraide arguments to rule out the existence of $\R$}\label{Hirai}

This last section is dedicated to prove that the set $\R$ as described before cannot exist. The proof we give is an adaptation of the work made by Hiraide in \cite{Hir} and Bohnet in \cite{Boh}. However, the proof itself is self-contained. 

The key advantage of Hiraide's proof over Newhouse's (Anosov case, see \cite{New}) is that the former takes place in a neighbourhood of the repeller while the latter makes a more global argument. As we want to avoid dealing with the bad set, we find it more convenient to follow Hiraide's proof. It is worth mention that because of the reasons just mentioned the authors could not directly adapt Newhouse's proof.

From now on we will work with both $\R$ and its boundary $\partial \R$ in $M$. Note that, as well as $\R$, the set $\partial \R$ is non empty, closed, saturated by the center-stable foliation, has trivial transversal holonomy and the volume of its center leaves is uniformly bounded. The set $\partial \R$ has empty interior.

Let us fix an orientation of $\W^u$. 

For every $x\in \R$ we can consider $\U^s(x)\times [-1,1]$ a $(\W^{cs},\W^u)$-product neighbourhood of $x$ with $\U^s(x)$ a small center-saturated Generalized Reeb stability neighborhood of $\C_x$ (see Remark \ref{Reebproduct}). 

Let $\{V_i=\U^s(x_i)\times (-1,1)\}_{0\leq i \leq k}$ be a finite cover of $\R$. Define $\mathcal{V}=\bigcup_{0\leq i\leq k} V_i$.

We are going to see that for certain points near $\partial \R$ the center-stable leaf through this point must remain in $\mathcal{V}$ (see ``Sandwich Lemma'' \ref{Sandwich}) while it must also intersect $\B$, thus yielding a contradiction.

\begin{obs} We can assume that:
\begin{itemize}
\item $\mathcal{V}\cap \B =\emptyset$ (this is obtained by taking each $\U^s(x)\times [-1,1]$ disjoint from $\B$).
\item $\U^s(x_i)\times \{1\}\cap \partial \R=\emptyset$ (we can assume this since $\partial \R$ has empty interior).
\end{itemize}
\end{obs}

For every $i\in \{0,\dotsc,k\}$ let $0<t_i<1$ be such that $\U(x_i)\times [t_i,1]\cap \partial \R = \U(x_i)\times \{t_i\}$. Denote each $\U(x_i)\times \{t_i\}$ as $P^+_i$. Informally speaking, $P^+_i$ is the last (according to the orientation of $\W^u$) center-stable plaque of $V_i$ that is contained $\partial \R$.


\begin{lema} There exists a pair $(x_0,\delta)\in \partial \R\times \mathbb{R}^+$ such that either $(x_0,x_0+\delta)^u\cap \R =\emptyset$ or $(x_0-\delta,x_0)^u\cap \R =\emptyset$.
\end{lema}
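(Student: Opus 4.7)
The plan is to locate a short unstable arc that starts in $\R$ and exits through $\partial\R$, and to take $x_0$ to be the last point of this arc that still lies in $\R$.

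The decisive step is to show that $\R$ is not saturated by the unstable foliation. Suppose for contradiction that it were. Combined with the stable saturation (inherent in the definition of $\R$) and the center saturation (Proposition~\ref{csaturated}), $\R$ would then be saturated simultaneously by $\W^{cs}$ and $\W^u$. In a $(\W^{cs},\W^u)$-product neighbourhood $\U^s(x)\times(-\delta,\delta)^u$ of any $x\in\R$ (Proposition~\ref{productnbhd}), $\W^{cs}$-saturation would force the central plaque $\U^s(x)\times\{0\}$ into $\R$, and $\W^u$-saturation would force each unstable arc $\{y\}\times(-\delta,\delta)^u$ through its points into $\R$. Hence the whole product neighbourhood would be contained in $\R$, making $\R$ open. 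But $\R$ is also closed, nonempty, and a proper subset of the connected manifold $M$ (nonempty by the earlier lemma of this section, and proper because the standing hypothesis $\B\neq\emptyset$ and $\B\cap\R=\emptyset$), contradicting connectedness.

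Therefore there exist $x_1\in\R$ and $y_1\in W^u(x_1)\setminus\R$. Using the fixed orientation of $\W^u$, I reduce (by reversing orientation if necessary) to the case $x_1<y_1$ and set $x_0$ to be the supremum, in this order, of $\R\cap[x_1,y_1]^u$. This set is nonempty (it contains $x_1$) and closed in $[x_1,y_1]^u$, so the supremum is attained; in particular $x_0\in\R$ and $x_0\neq y_1$. By maximality $(x_0,y_1]^u\cap\R=\emptyset$, and taking $\delta$ equal to the intrinsic unstable length from $x_0$ to $y_1$ yields $(x_0,x_0+\delta)^u\cap\R=\emptyset$. Since $x_0\in\R$ is accumulated along the unstable direction by points of $M\setminus\R$, it lies in $\partial\R$, and $(x_0,\delta)$ is the desired pair.

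The main obstacle is the $\W^u$-saturation argument above; once that is settled, the selection of $x_0$ is a straightforward one-dimensional maximality argument along a single unstable leaf.
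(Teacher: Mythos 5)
Your proof is correct and follows essentially the same route as the paper: first show $\R$ cannot be saturated by $\W^u$ (the paper dismisses this in a parenthetical ``it would be all $M$''; your saturation-by-$\W^{cs}$-and-$\W^u$ openness/connectedness argument is exactly the missing detail, legitimately using Proposition~\ref{csaturated}), then take the last point of $\R$ along an unstable arc exiting $\R$. Your supremum over $\R\cap[x_1,y_1]^u$ is just the paper's choice of an endpoint of a connected component of $W^u(x)\setminus\R$, phrased differently.
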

\begin{proof}
Observe first that the set $\R$ cannot be saturated by the unstable foliation (because in that case it would be all $M$), so there must exist a point $x \in \R$ such that $W^u(x)\cap M\setminus \R \neq \emptyset$. 

As $\R\cap W^u(x)$ is closed in $W^u(x)$ with the intrinsic topology then there must be at least one connected component $I$ of $W^u(x)\setminus \R$. Choose $x_0$ an endpoint of $I$.
\end{proof}

Fix $(x_0, \delta)$ given by the previous lemma. Assume that $(x_0,x_0+\delta)^u\cap \R =\emptyset$ (otherwise, simply change the orientation of $\W^u$).

By Proposition \ref{completeness} and Proposition \ref{csleavesR} we have $W^{cs}(x_0)=\bigcup_{x\in \C}W^s(x)$ for some center leaf $\C\subset W^{cs}(x_0)$ such that $\C\cap W^s(x)=\{x\}$ for every $x\in \C$. Assume, without loss of generality, that $\C_{x_0}$ is such a center leaf.

Some of the plaques $\{P^+_1,\dotsc,P^+_k\}$ could possibly be contained in $W^{cs}(x_0)$. Denote all of them as $\{P^+_{i_1},\dotsc,P^+_{i_m}\}$. We can consider now $N>0$ large enough as to assure that:

$$(P^+_{i_1}\cup \dotsb \cup P^+_{i_m}) \subset \bigcup_{x\in \C_{x_0}}\Ball^s_N(x).$$

If $W ^{cs}(x_0)$ contains none of the plaques $\{P^+_1,\dotsc,P^+_k\}$ then take $N>0$ any positive number. 

For simplicity, let us denote $\bigcup_{x\in \C_{x_0}}\Ball^s_N(x)$ as $E$. The subset $E$ of $W^{cs}(x_0)$ is then in the hypothesis of the Proposition \ref{productnbhd} (in particular, $\Hol^u(E)=\Id$) so we can consider a $(\W^{cs},\W^u)$-product neighborhood $E\times [-1,1]$ of it. Recall that this means that each $E\times \{y\}$ lies in a center-stable leaf, each $\{x\}\times [-1,1]$ lies in an unstable leaf and $E\times \{0\}=E$.

By eventually shriking it in the unstable direction, we can assume that $E\times [-1,1]$ is contained in $\mathcal{V}$.

\begin{lema} Through every $x\in W^{cs}(x_0)\setminus E$ there exists $L_x=[x,x+\delta_x ]^u$ a closed, non trivial unstable segment that intersects $\R$ just in its endpoints. Moreover, $L_x$ varies continuously with $x$ in $W^{cs}(x_0)\setminus E$ and is contained in any element $V_i$ of the covering that contains $x$.
\end{lema}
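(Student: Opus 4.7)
My plan is to first upgrade the hypothesis on $x_0$ to every point of $W^{cs}(x_0)$, and then construct $L_x$ as the first positive return of the unstable arc from $x$ to $\R$ inside a chosen chart $V_i\ni x$. The driver of the whole argument is the following observation: since $\R$ is saturated by $\W^{cs}$ (Proposition~\ref{csaturated}), in any $(\W^{cs},\W^u)$-product neighbourhood $E'\times[-1,1]$ the trace of $\R$ has the product form $E'\times S$ for some closed set $S\subset[-1,1]$. Indeed, each slice $E'\times\{t\}$ lies in a single $\W^{cs}$-leaf and is therefore either fully in or fully out of $\R$.

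I will first show that for every $y\in W^{cs}(x_0)$ there is $\delta_y>0$ with $(y,y+\delta_y)^u\cap\R=\emptyset$. Let $B\subset W^{cs}(x_0)$ denote the set of such $y$. Then $x_0\in B$. Around each $y\in W^{cs}(x_0)$, Proposition~\ref{productnbhd} (whose hypothesis holds by Lemma~\ref{lemma1}) yields a $(\W^{cs},\W^u)$-product chart with base $E_y\subset W^{cs}(x_0)$ containing $y$. By the observation, the existence of a valid $\delta$ depends only on the closed set $S$ of the chart, not on the horizontal position in $E_y$; hence $B\cap E_y$ is either $E_y$ or empty. This forces $B$ to be both open and closed in $W^{cs}(x_0)$, so by connectedness $B=W^{cs}(x_0)$. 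In particular, $W^{cs}(x_0)\subset\partial\R$.

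Next, I will fix $x\in W^{cs}(x_0)\setminus E$ and any $V_i\ni x$, write $\R\cap V_i=\U^s(x_i)\times S_i$, and let $t_x\in S_i$ denote the height of $x$ in the product coordinates of $V_i$. The previous step provides a gap $(t_x,t_x+\varepsilon)\cap S_i=\emptyset$, so $t^*:=\inf\{t\in S_i:t>t_x\}$ satisfies $t^*>t_x$. The key step is to rule out $t_x\geq t_i$. If $t_x=t_i$ then $x\in P^+_i$, and $P^+_i$, being the $\W^{cs}$-plaque of $V_i$ through $x$, lies in $W^{cs}(x_0)$ and hence in $E$, contradicting $x\notin E$. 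If $t_x>t_i$, then the open connected set $W:=\U^s(x_i)\times(t_i,1)$ is disjoint from $\partial\R$ (by the defining property of $t_i$) and meets $\R$ (at the plaque of $x$); thus $W\cap\R=W\cap\R^\circ$ is both open and closed in $W$, forcing $W\subset\R^\circ$ and $x\in\R^\circ$, which contradicts $x\in\partial\R$. Therefore $t_x<t_i$ and $t^*\leq t_i<1$; setting $L_x=[x,x+\delta_x]^u$ to be the unstable arc from $x$ to the point at height $t^*$ produces a closed, non-trivial segment contained in $V_i$ that intersects $\R$ only at its endpoints.

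Finally, $L_x$ is intrinsically characterised as the first positive return of the unstable arc from $x$ to the closed set $\R$, so it is independent of the choice of $V_i\ni x$. For continuity, a sufficiently close $x'\in W^{cs}(x_0)\setminus E$ lies in the same $V_i$ as $x$ and in the same $\W^{cs}$-plaque of $V_i$ (since different plaques are at distinct heights and hence separated in $V_i$); thus $t_{x'}=t_x$, $t^*(x')=t^*(x)$, and $L_{x'}$ is the horizontal translate of $L_x$ inside $V_i$, depending continuously on $x'$ by continuity of the $(\W^{cs},\W^u)$-product chart. The main obstacle of the proof is ruling out $t_x\geq t_i$, which is precisely the reason why $E$ had to be chosen beforehand to absorb every $P^+_{i_j}$ contained in $W^{cs}(x_0)$, and why no $\partial\R$-plaque of $V_i$ sits above $P^+_i$.
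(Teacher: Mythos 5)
Your proof is correct and follows essentially the same route as the paper's: cs-saturation of $\R$ gives the product form of its trace in $(\W^{cs},\W^u)$-charts, which propagates the gap at $x_0$ to a gap just above every point of $W^{cs}(x_0)$, while the choice of $E$ absorbing the plaques $P^+_{i_j}$ contained in $W^{cs}(x_0)$ guarantees a first return to $\R$ above $x$ inside each $V_i$. You merely spell out (correctly) details the paper leaves implicit, such as the clopen propagation argument and the exclusion of $t_x\geq t_i$ --- where the case $t_x>t_i$ is in fact immediate, since $x\in\partial \R$ and by the definition of $t_i$ there are no $\partial\R$-plaques of $V_i$ above height $t_i$.
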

\begin{proof}
Let $x\in W^{cs}(x_0)\setminus E$. The point $x$ lies in some $V_i$. Then it must be that the connected component of $(W^u_+(x)\cup \{x\})\cap V_i$ that contains $x$ intersects $\R$ in at least some other point distinct from $x$ (since $x\notin \bigcup_{0\leq i\leq k}P^+_i$). 

Observe that the fact that $x$ lies inside $W ^{cs}(x_0)$ and $(x_0,x_0+\delta)^u$ is disjoint from $\R$ implies that $x$ cannot be accumulated in $W^u_+(x)\cup \{x\}$ by points of $W^u_+(x)\cap \R$. The existence of the stated $\delta_x>0$ follows.

The definition of $L_x$ does clearly not depend on the choice of the $V_i$ containing $x$. The continuous dependence on $x$ follows.
\end{proof}

By shrinking $E\times [-1,1]$ even more (if necessary) in the unstable direction, we can assume that $\{x\}\times [0,1]\subset L_x$ for every $x\in \partial E$.

Now, let $y\in (x_0,x_0+\delta)^u$ be close enough to $x_0$ such that $y\in E\times [0,1]$. Since $y\in M\setminus \R$ we have that $W^{cs}(y)\cap \B\neq \emptyset$. We are going to see that, on the other hand, $W^{cs}(y)\subset \mathcal{V}$ and this will yield a contradiction since $\mathcal{V}\cap \B=\emptyset$. 

The proof then is finished up to the following:

\begin{lema}[Sandwich lemma]\label{Sandwich} For $y$ as above we have $W^{cs}(y)\subset \mathcal{V}$.
\end{lema}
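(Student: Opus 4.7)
The plan is to build a continuous unstable-holonomy lift $\Phi \colon W^{cs}(x_0) \to W^{cs}(y)$ that is surjective and satisfies $[x, \Phi(x)]^u \subset \mathcal{V}$ for every $x \in W^{cs}(x_0)$. Since $W^{cs}(x_0) \subset \R \subset \mathcal{V}$, surjectivity will immediately give $W^{cs}(y) = \Phi(W^{cs}(x_0)) \subset \mathcal{V}$, which is exactly the statement of the lemma.

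First I would define $\Phi$ on $E$ using the product coordinates of $E \times [-1,1]$: writing $y = (x_0, t_y)$, set $\Phi(x) = (x, t_y)$, so $\Phi(E) = E \times \{t_y\}$ and the arc $[x, \Phi(x)]^u = \{x\} \times [0, t_y]$ sits in $E \times [-1,1] \subset \mathcal{V}$. I would then extend $\Phi$ by continuation: given a path $\alpha \colon [0,1] \to W^{cs}(x_0)$ starting at $\alpha(0) \in E$, continuously lift $\alpha$ along unstable leaves to a path $\beta \colon [0,1] \to W^{cs}(y)$ with $\beta(0) = \Phi(\alpha(0))$ and $\beta(t) \in W^u(\alpha(t)) \cap W^{cs}(y)$, and define $\Phi(\alpha(1)) = \beta(1)$. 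The Remark after Proposition \ref{csleavesR}, asserting triviality of $\Hol^u$ on cs-leaves in $\R$, ensures path-independence and hence well-definedness of $\Phi$.

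The heart of the argument is to show that $[\alpha(t), \beta(t)]^u \subset \mathcal{V}$ along the entire lift. For $\alpha(t) \in E$ this is the base case. For $\alpha(t) \in W^{cs}(x_0) \setminus E$, one has $\alpha(t) \in \R$ while $\beta(t) \in W^{cs}(y)$ is disjoint from $\R$ by the center-stable saturation of $\R$ (Proposition \ref{csaturated}). The initial arc $(x_0, y)^u$ is positively oriented with interior disjoint from $\R$; continuity of the lift, together with closedness of $\R$, forces $[\alpha(t), \beta(t)]^u$ to remain positively oriented with interior avoiding $\R$, so its length stays strictly below the first $\R$-return $\delta_{\alpha(t)}$ and the arc is contained in $L_{\alpha(t)} \subset V_i \subset \mathcal{V}$. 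The matching at $\partial E$ is consistent because $\{x\} \times [0,1] \subset L_x$ there, by the choice made just before the statement.

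Finally I would verify surjectivity: $A := \Phi(W^{cs}(x_0))$ is open in $W^{cs}(y)$ by the local product structure of $(\W^{cs}, \W^u)$, and closed because any limit $w = \lim \Phi(x_n)$ admits a local product neighborhood in which inverse unstable holonomy produces a preimage $x \in W^{cs}(x_0)$ with $\Phi(x) = w$; the bundle structure of $W^{cs}(x_0)$ over the compact leaf $\C_{x_0}$ from Proposition \ref{csleavesR} is what keeps $x$ on the same center-stable leaf. By connectedness of $W^{cs}(y)$, $A = W^{cs}(y)$. The principal obstacle is rigorously controlling the continuation so that the lift never crosses $\R$ or escapes $\mathcal{V}$: this is guaranteed by the closedness of $\R$ together with the continuous dependence $x \mapsto \delta_x$, which rule out any sudden jump past the next $\R$-return during the lift.
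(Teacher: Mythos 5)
Your overall plan --- a holonomy transfer map from $W^{cs}(x_0)$ to $W^{cs}(y)$ trapped in the fibers $L_x$, followed by an open--closed argument for surjectivity --- is the same as the paper's, and your control of the arcs $[\alpha(t),\beta(t)]^u$ via closedness of $\R$, cs-saturation of $\R$ and continuity of $x\mapsto L_x$ is essentially the paper's interval-bundle construction. The genuine gap is the well-definedness of $\Phi$, which you dispatch in one line by invoking the Remark after Proposition \ref{csleavesR}. That remark only gives triviality of the holonomy \emph{germs} of the leaf: for each loop, the return map is the identity on some transversal segment whose size depends on the loop. Your lift starts at $y$, a point at a fixed, definite unstable distance from $W^{cs}(x_0)$, and the loops to be considered range over the whole noncompact leaf, so there is no uniform scale of triviality; germinal triviality does not imply that the continuation of a loop starting at $y$ (or $y'$) closes up, and an endpoint in $\mathrm{int}(L_x)\cap W^{cs}(y)$ is a priori not unique. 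This is exactly the crux of the lemma, and it is what the paper's two Claims are for: first, every loop in $W^{cs}(x_0)\setminus E$ based at $x_0'$ is homotoped rel endpoints into $\partial E$ by a retraction along stable fibers, which is legitimate only because $W^{cs}(x_0)=\bigcup_{x\in\C_{x_0}}W^s(x)$ with $\C_{x_0}$ meeting each stable leaf once (Propositions \ref{completeness} and \ref{csleavesR}); second, lifts of loops contained in $\partial E$ at level $t$ close up because they stay in $\partial E\times\{t\}$ inside the product neighbourhood $(E\cup\partial E)\times(0,1)$, which is trivial at the \emph{definite} scale containing $y'$ thanks to Proposition \ref{productnbhd}. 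Without an argument of this type your $\Phi$ may be multivalued and the proof does not go through.

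A second, smaller issue is the closedness step in your surjectivity argument. If $\Phi(x_n)\to w$ in $W^{cs}(y)$, inverse unstable holonomy at $w$ produces points near the $x_n$, but the $x_n$ can escape to infinity along the noncompact stable fibers of $W^{cs}(x_0)$ and accumulate on a \emph{different} center-stable leaf of $\R$; the bundle structure over the compact leaf $\C_{x_0}$ does not rule this out, so "keeps $x$ on the same center-stable leaf" is not justified. The paper avoids tracking preimage sequences altogether by proving a uniform openness (properness) estimate: using a Lebesgue number $\eta$ of the cover $\mathcal{V}$ there is $\delta>0$ with $\Ball^{cs}_\delta(F(x))\subset F(\Ball^{cs}_{\eta/2}(x))$ for every $x$, which makes the image simultaneously open and closed in $W^{cs}(y)$. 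You would need to add such a uniform estimate (or an equivalent properness argument) to complete this step.
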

\begin{proof}

First, note that $\bigcup_{x\in W^{cs}(x_0)\setminus E}L_x$ is a foliated interval-bundle with base $W^{cs}(x_0)\setminus E$ and fibers the $L_x$'s that are transversal to $\W^{cs}$ for every $x$. 

We have then a well defined projection along fibers $\pi:\bigcup_{x\in W^{cs}(x_0)\setminus E}L_x\to W^{cs}(x_0)\setminus E$ given by $\pi([x,x+\delta_x))=x$ for $L_x=[x,x+\delta_x]$ .

For every $\gamma:[0,1]\to W^{cs}(x_0)\setminus E$ and $z\in L_{\gamma(0)}$ we can lift $\gamma$ to $\gamma_z:[0,1]\to W^{cs}(z)$ such that $\pi\circ\gamma_z=\gamma$. 

This lift defines a 
projection $p_\gamma:L_{\gamma(0)}\to L_{\gamma(1)}$ given by $p_\gamma(z)= \gamma_z(1)$. 

Fix $x_0'\in \partial E$.

\vspace{0.2cm}
\emph{Claim.} For every curve $\gamma:[0,1]\to W^{cs}(x_0)\setminus E$ such that $\gamma(0)=\gamma(1)=x_0'$ there exists an endpoints fixed homotopy $\gamma_s:[0,1]\to W^{cs}(x_0)\setminus E$ such that $\gamma_0=\gamma$ and $\gamma_1([0,1])\subset \partial E$.

\vspace{0.2cm}
\emph{Proof of claim.}
For every $s\in (0,1]$ we can consider a retraction $r_s:W^{cs}(x_0)=\bigcup_{x\in \C_{x_0}}W^s(x)\to \bigcup_{x\in \C_{x_0}}\Ball^s_{N/s}(x)$ such that $r_s$ varies continously with $s$ and is the identity in $E=\bigcup_{x\in \C_{x_0}}\Ball^s_{N}(x)$. For every $s\in (0,1]$ compose $\gamma$ with $r_s$ to get a curve $\gamma_s$ and set $\gamma_0$ as $\gamma$. Then the homotopy $\gamma_s$ is as desired and this proves the claim.
\vspace{0.3cm}

Now, for every $x\in W^{cs}(x_0)\setminus E$ consider $\gamma_x:[0,1]\to W^{cs}(x_0)\setminus E$ such that $\gamma_x(0)=x_0'$ and $\gamma_x(1)=x$. 

\begin{figure}[htb]
  \centering
  \def\svgwidth{300pt}
  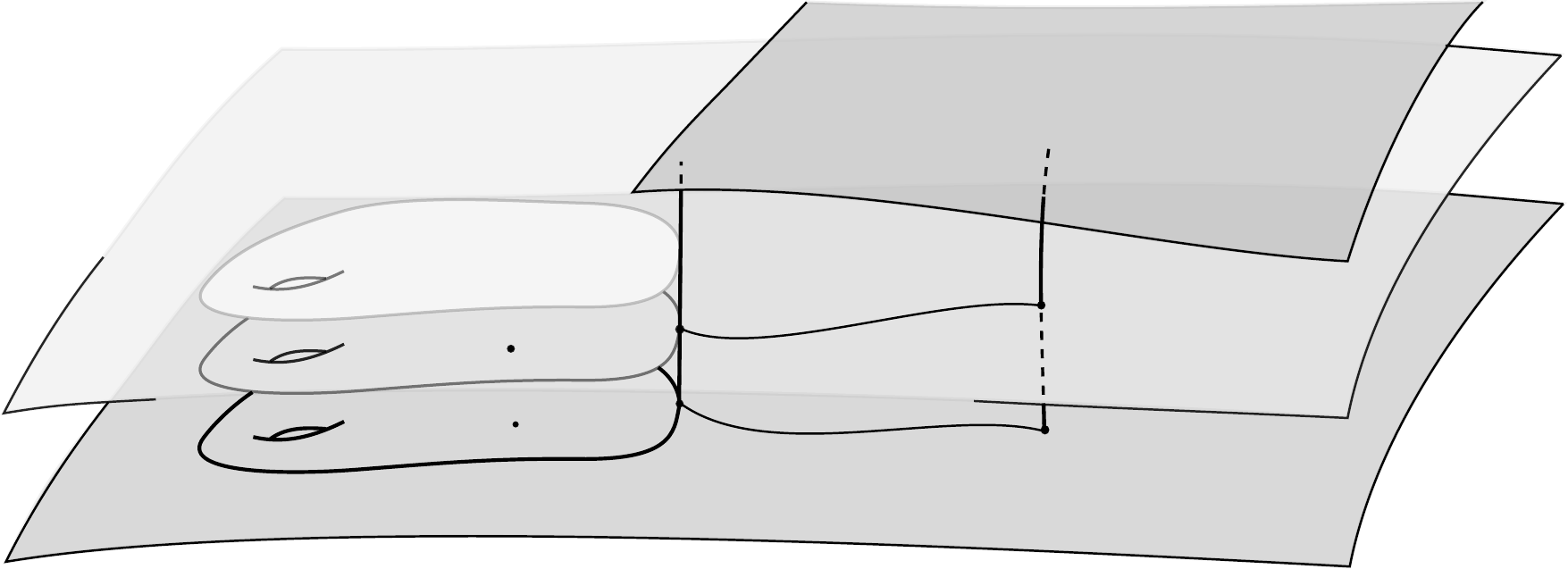
  \caption{}\label{figHir}
\end{figure}

\vspace{0.2cm}
\emph{Remark.} The existence of such a $\gamma_x$ is guaranteed if $\mathrm{dim}(E^s)\geq 2$ for this implies that $W^{cs}(x_0)\setminus E$ is path connected. Otherwise, if $\mathrm{dim}(E^s)=1$ and  $W^{cs}(x_0)\setminus E$ has two connected components simply add $x_0''\in \partial E$ not in the same connected component as $x_0'$ and reproduce the arguments that follow separately in each connected component with $x_0'$ and $x_0''$ playing the same role.
\vspace{0.2cm}

Denote $y=(x_0,t)$ in $E\times (0,1)$. Observe that we can extend $E\times (0,1)$ to a $(\W^{cs},\W^u)$-product neighborhood $(E\cup \partial E)\times (0,1)$. Denote $y'$ the point $(x_0',t)$ in this coordinates. 

Define $F:W^{cs}(x_0)\setminus E\to W^{cs}(y)$ by $$F(x)=p_{\gamma_x}(y').$$
\vspace{0.2cm}
\emph{Claim.} The definition of $F$ does not depend on the choice of $\gamma_x$.

\vspace{0.2cm}
\emph{Proof of claim.}
If $\gamma_x':[0,1]\to W^{cs}(x_0)\setminus E$ is another path such that $\gamma_x'(0)=x_0'$ and $\gamma_x'(1)=x$, we have to prove that $\gamma_x'^{-1}*\gamma_x$ lifts to a closed path $(\gamma_x*\gamma_x'^{-1})_{y'}$ from $y'$ to itself.

By the first claim, $\gamma_x'^{-1}*\gamma_x$ is homotopic to some closed path $\alpha:[0,1]\to W^{cs}(x_0)\setminus E$ such that $\alpha([0,1])\subset \partial E$. This homotopy lifts to an homotopy contained in $W^{cs}(y')$ from $(\gamma_x'^{-1}*\gamma_x)_{y'}$ to $\alpha_{y'}$, with $\alpha_{y'}$ being the lift of $\alpha$ from $y'$. 
 
Recall that we have $y'\in (E\cup\partial E)\times \{t\}$. Then the lift $\alpha_{y'}$ has to be closed because $\alpha_{y'}([0,1])$ is contained in $W^{cs}(y')$ and this implies $\alpha_{y'}([0,1])\subset (\partial E\times \{t\})$ and then $\alpha_{y'}(1)=(\partial E\times \{t\})\cap L_{x_0'}=y'$.
This proves the claim.
\vspace{0.3cm}

So we have a well defined map $F:W^{cs}(x_0)\setminus E\to W^{cs}(y)$. We can extend $F$ to $E$ in the natural way: $F(x)=(x,t)$ for every $x\in E$. Then we have $F:W^{cs}(x_0)\to W^{cs}(y)$.

The map $F$ is clearly an injective local homeomorphism. Given a Lebesgue number $\eta>0$ for the covering $\mathcal{V}=\bigcup_{0\leq i\leq k} V_i$ there exists $\delta>0$ such that for every $x\in W^{cs}(x_0)$ we have that $\Ball^{cs}_\delta(F(x))\subset F(\Ball^{cs}_{\eta/2}(x))$. We deduce from this that $F$ is a proper map. Then $F$ is also surjective and this implies that $W^{cs}(y)\subset \mathcal{V}$ since $E\times \{t\}$ and every $L_x$ are contained in $\mathcal{V}$.
\end{proof}


\begin{thebibliography}{texttLLLL}

\bibitem[B13]{Boh} D. Bohnet, Codimension-1 partially hyperbolic diffeomorphisms with a uniformly compact center foliation, \emph{Journal of Modern Dynamics}, {\bf 7} 4 (2013), 565--604. 

\bibitem[BB16]{BoBo} D. Bohnet, C. Bonatti, Partially hyperbolic diffeomorphisms with a uniformly compact center foliation: The quotient dynamics, \emph{Ergodic Theory and Dynamical Systems}, {\bf 36} 4 (2016), 1067--1105. 

\bibitem[BW05]{BoW} C. Bonatti, A. Wilkinson, Transitive partially hyperbolic diffeomorphisms on 3-manifolds, \emph{Topology}, {\bf 44} 3 (2005), 475--508.

\bibitem[CLN85]{CLN} C. Camacho, A. Lins Neto, Geometric Theory of Foliations,  \emph{Birkhäuser}, 1985. viii+206 pp.

\bibitem[CC00]{CC} A. Candel, L. Conlon, Foliations I, \emph{Graduate Studies in Mathematics}, 23, American Mathematical Society, Providence, RI, 2000. xiv+402 pp.


\bibitem[C15]{Car} P. Carrasco, Compact dynamical foliations, \emph{Ergodic Theory and Dynamical Systems}, {\bf 35} 8 (2015), 2474--2498.

\bibitem[EMS77]{EMS} R. Edwards, K. Millett, and D. Sullivan. Foliations with all leaves compact. \emph{Topology}, {\bf 16} 1 (1977), 13–32.

\bibitem[E76]{Eps} D. B. A. Epstein, Foliations with all leaves compact,  \emph{Ann. Inst. Fourier (Grenoble)}, {\bf 26} 1 (1976), 265--282.


\bibitem[EV78]{EV} D. B. A. Epstein, E. Vogt, A counterexample to the periodic orbit conjecture in codimension 3, \emph{Annals of Math.}, (2) {\bf 108} (3) (1978), 539--552.


\bibitem[G12]{Gog} A. Gogolev, Partially hyperbolic diffeomorphisms with compact center foliations, \emph{Journal of Modern Dynamics}, {\bf 5} 4 (2012), 747--769.

\bibitem[HH87]{HH} G. Hector and U. Hirsch, Introduction to the geometry of foliations, Part B, second ed., \emph{Aspects of Mathematics}, E3, Friedr. Vieweg and Sohn, Braunschweig, 1987.

\bibitem[H01]{Hir} K. Hiraide, A simple proof of the Franks-Newhouse theorem on codimension-one Anosov diffeomorphisms, \emph{Ergodic Theory and Dynamical Systems}, {\bf 21} 3 (2001), 801--806.


\bibitem[HPS77]{HPS} M. Hirsch, C. Pugh and M. Shub, Invariant Manifolds,  \emph{Springer Lecture Notes in Math.}, {\bf 583} (1977).

\bibitem[L15]{Les} P. Lessa, Reeb stability and the Gromov-Hausdorff limits of leaves in compact foliations, \emph{Asian Journal of Mathematics}, {\bf 19} 3 (2015), 433--464.

\bibitem[N70]{New} S. E. Newhouse, On Codimension One Anosov Diffeomorphisms, \emph{American Journal of Mathematics}, {\bf 92} 3 (1970), 761–770.


\bibitem[RHRHU07]{RHRHU} F. Rodriguez Hertz, M. A. Rodriguez Hertz, R. Ures, A survey of
partially hyperbolic dynamics, Partially hyperbolic dynamics,
laminations, and Teichm\"uler flow, 35--87, Fields Inst. Commun.,
51, Amer. Math. Soc., Providence, RI, 2007.


\bibitem[S76]{S} D. Sullivan, A counterexample to the periodic orbit
conjecture \emph{Inst. Hautes \'{E}tudes Sci. Publ. Math.} {\bf 46} (1976), 5--14.

\bibitem[V77]{Vog} E. Vogt, A periodic flow with infinite Epstein hierarchy \emph{Manuscripta Math.} {\bf 22} 4 (1977), 403–412.

\end{thebibliography}
\end{document}